\def\XXint#1#2#3{{\setbox0=\hbox{$#1{#2#3}{\int}$ }
\vcenter{\hbox{$#2#3$ }}\kern-.6\wd0}}
\def\({\left(}
\def \){ \right)}
\newtheorem{theorem}{Theorem}[section]
\newtheorem{lemma}[theorem]{Lemma}
\theoremstyle{definition}
\newtheorem{remark}[theorem]{Remark}
\renewcommand{\appendix}{\par
   \setcounter{section}{0}%
   \setcounter{subsection}{0}%
   \setcounter{subsubsection}{0}%
   \gdef\thesection{\@Alph\c@section}%
   \gdef\thesubsection{\@Alph\c@section.\@arabic\c@subsection}%
   \gdef\theHsection{\@Alph\c@section.}%
   \gdef\theHsubsection{\@Alph\c@section.\@arabic\c@subsection}%
   \csname appendixmore\endcsname
 }
\numberwithin{equation}{section}
\begin{document}

\arraycolsep=1pt

\title{\bf\Large $L^p$-improving bounds of maximal functions along planar curves
\footnotetext{\hspace{-0.35cm} 2020 {\it
Mathematics Subject Classification}. Primary 42B25;
Secondary 42B20.
\endgraf {\it Key words and phrases.} maximal function, $L^p$-improving bounds, local smoothing estimate, Fourier integral operator.
\endgraf Naijia Liu was supported by China Postdoctoral Science Foundation (No.~2022M723673). Haixia Yu was supported by Natural Science Foundation of China (No.~12201378), Guangdong Basic and Applied Basic Research Foundation (No.~2023A1515010635) and STU Scientific Research Foundation for Talents (No.~NTF21038).}}
\author{Naijia Liu and Haixia Yu\footnote{Corresponding author.}}

\date{}

\maketitle

\vspace{-0.7cm}

\begin{abstract}
 In this paper, we study the $L^p(\mathbb{R}^2)$-improving bounds, i.e., $L^p(\mathbb{R}^2)\rightarrow L^q(\mathbb{R}^2)$ estimates, of the maximal function $M_{\gamma}$ along a plane curve $(t,\gamma(t))$, where
  $$M_{\gamma}f(x_1,x_2):=\sup_{u\in [1,2]}\left|\int_{0}^{1}f(x_1-ut,x_2-u \gamma(t))\,\textrm{d}t\right|,$$
 and $\gamma$ is a general plane curve satisfying some suitable smoothness and curvature conditions. We obtain $M_{\gamma} : L^p(\mathbb{R}^2)\rightarrow L^q(\mathbb{R}^2)$ if $(\frac{1}{p},\frac{1}{q})\in \Delta\cup \{(0,0)\}$ and $(\frac{1}{p},\frac{1}{q})$ satisfying $1+(1 +\omega)(\frac{1}{q}-\frac{1}{p})>0$, where $\Delta:=\{(\frac{1}{p},\frac{1}{q}):\ \frac{1}{2p}<\frac{1}{q}\leq \frac{1}{p}, \frac{1}{q}>\frac{3}{p}-1  \}$ and $\omega:=\limsup_{t\rightarrow 0^{+}}\frac{\ln|\gamma(t)|}{\ln t}$. This result is sharp except for some borderline cases. As Hickman stated in [J. Funct. Anal. 270 (2016), pp. 560--608], this is a very different situation.
\end{abstract}

\section{Introduction}

Maximal functions are crucial objects in harmonic analysis due to their importance in harmonic analysis itself and vast applications in many other areas of mathematics such as theory of partial differential equations. The main purpose of this paper is devoted to the theory of $L^p(\mathbb{R}^2)$-improving bounds, i.e., $L^p(\mathbb{R}^2)\rightarrow L^q(\mathbb{R}^2)$ estimates, of the \emph{maximal function} $M_{\gamma}$ along the plane curve $(t,\gamma(t))$, where \begin{align}\label{eq:1.00}
M_{\gamma}f(x_1,x_2):=\sup_{u\in [1,2]}\left|\int_{0}^{1}f(x_1-ut,x_2-u \gamma(t))\,\textrm{d}t\right|.
\end{align}
As Hickman stated in \cite{H}, the $L^p(\mathbb{R}^2)$-improving bounds of $M_{\gamma}$ is a very different situation. It is also a very active research topic in harmonic analysis, and has attracted a lot of attention in the last decades. The literature devoted to the subject is so broad that it is impossible to provide complete and comprehensive bibliography. Therefore, we quote only a few papers, and refer readers to \cite{Bour86,Sch,KLO,LWJ} and the references within for more detailed discussion.

The \emph{spherical maximal function} $\mathcal{M}_{\mathbb{S}^{n-1}}$ is defined by
\begin{align}\label{eq:1.01}
\mathcal{M}_{\mathbb{S}^{n-1}}f(x):=\sup_{u\in(0,\infty)}\left|
\int_{\mathbb{S}^{n-1}}f(x-uy)
\,\textrm{d}\sigma(y)\right|,
\end{align}
where $\textrm{d}\sigma$ is the surface measure on $\mathbb{S}^{n-1}$. In 1976, Stein \cite{Ste} obtained the $L^p(\mathbb{R}^n)$ boundedness of $\mathcal{M}_{\mathbb{S}^{n-1}}$ if  $p>\frac{n}{n-1}$ with $n\geq 3$. He also obtained that no such boundedness can hold for $p\leq \frac{n}{n-1}$ with $n\geq 2$. Since $\mathcal{M}_{\mathbb{S}^{1}}$ is not bounded on $L^2(\mathbb{R}^2)$ by simple examples, then the $n=2$ case is more complicated. Later, when $n=2$, Bourgain \cite{Bour86} settled this problem, and he proved the $L^p(\mathbb{R}^2)$ boundedness of $\mathcal{M}_{\mathbb{S}^{1}}$ for all $p>2$. Mockenhaupt, Seeger and Sogge \cite{MSS,MSS92} found a new proof of this boundedness by using their local smoothing estimates.

It is natural to study the $L^p(\mathbb{R}^n)\rightarrow L^q(\mathbb{R}^n)$ boundedness of $\mathcal{M}_{\mathbb{S}^{n-1}}$. However, there is no such boundedness for $\mathcal{M}_{\mathbb{S}^{n-1}}$ unless $p=q$. Now, we modify the definition in \eqref{eq:1.01} and define
\begin{align}\label{eq:1.02}
\mathcal{\bar{M}}_{\mathbb{S}^{n-1}}f(x):=\sup_{u\in [1,2]}\left|
\int_{\mathbb{S}^{n-1}}f(x-uy)
\,\textrm{d}\sigma(y)\right|.
\end{align}
Thanks to the supremum in \eqref{eq:1.02} taken over $[1,2]$, $\mathcal{\bar{M}}_{\mathbb{S}^{n-1}}$ is actually bounded from $L^p(\mathbb{R}^n)$ to $L^q(\mathbb{R}^n)$ for some $q>p$. This phenomenon is called $L^p(\mathbb{R}^n)$-improving. Schlag \cite{Sch} established the $L^p(\mathbb{R}^2)\rightarrow L^q(\mathbb{R}^2)$ boundedness of $\mathcal{\bar{M}}_{\mathbb{S}^{1}}$ for any $(\frac{1}{p},\frac{1}{q})$ lies in the interior of the triangle with vertices $(\frac{2}{5},\frac{1}{5})$, $(\frac{1}{2},\frac{1}{2})$ and $(0,0)$. Of course, it is also bounded when $(\frac{1}{p},\frac{1}{q})$ lies on the half open line connecting $(\frac{1}{2},\frac{1}{2})$ and $(0,0)$. Therefore, $\mathcal{\bar{M}}_{\mathbb{S}^{1}}$ is bounded on $\Delta\cup (0,0)$, where the definition of $\Delta$ can be found in the following Theorem \ref{thm1}. Schlag also proved that this boundedness is sharp except for some endpoints. The endpoint estimates of $\mathcal{\bar{M}}_{\mathbb{S}^{1}}$ can be found in Lee \cite{Lee03}. He showed $\mathcal{\bar{M}}_{\mathbb{S}^{1}} : L^p(\mathbb{R}^2)\rightarrow L^q(\mathbb{R}^2)$ if $(\frac{1}{p},\frac{1}{q})$ lies on the open line connecting $(\frac{2}{5},\frac{1}{5})$ and $(\frac{1}{2},\frac{1}{2})$, or connecting $(\frac{2}{5},\frac{1}{5})$ and $(0,0)$. Schlag and  Sogge \cite{SchS} characterized the $L^p(\mathbb{R}^n)\rightarrow L^q(\mathbb{R}^n)$ boundedness of $\mathcal{\bar{M}}_{\mathbb{S}^{n-1}}$ for $n\geq 3$ up to the borderline cases.

The study of $\mathcal{\bar{M}}_{\mathbb{S}^{n-1}}$ was later extended to cover more general and diverse situations: variable coefficient settings (see, for example, \cite{Sog,SchS,Iose}); Heisenberg radial function settings (see, for example, \cite{BGuoHS,LLee}); hypersurface settings (see, for example, \cite{SSte,IKM,LYan}); taking supremum over a set $E$ (see, for example, \cite{RS,AHRS}) and so on.

What's more, the \emph{maximal function} $\mathcal{M}$ defined by averages over curve $(t,\gamma(t))$,
\begin{align}\label{eq:1.03}
\mathcal{M}f(x_1,x_2):=\sup_{\epsilon\in(0,\infty)}
\frac{1}{2\epsilon}\int_{-\epsilon}^{\epsilon}\left|f(x_1-t,x_2-\gamma(t))\right|
\,\textrm{d}t,
\end{align}
is also been extensively studied, which is also a classical area of harmonic analysis. For $\gamma(t):=t^2$, Nagel, Riviere and Wainger \cite{NRW76} obtained the $L^p(\mathbb{R}^2)$ boundedness of $\mathcal{M}$ for any $p>1$. Stein \cite{Ste1} showed this boundedness for homogeneous curves and Stein and Wainger \cite{SW1} for smooth curves. Later it was extended to more general families of curves; see, for example, \cite{SW,CCVWW,CVWW}. There are some results which extend the aforementioned results to variable coefficient settings; see, for example, \cite{CWW,BJ,MR,GHLR,LYu}. In particular, in \cite{LSY}, the authors of this paper and Song proved that the \emph{maximal function}
\begin{align*}
\sup_{u\in (0,\infty)}\mathcal{M}^\infty_{u,\gamma}f(x_1,x_2):=\sup_{u\in (0,\infty)}\sup_{\epsilon \in (0,\infty)}
\frac{1}{2\varepsilon}\int_{-\varepsilon}^{\varepsilon}\left|f(x_1-t,x_2-u\gamma(t))\right|
\,\textrm{d}t
\end{align*}
is bounded on $L^p(\mathbb{R}^2)$ if and only if $p\in(2,\infty]$ for $\gamma$ satisfying some conditions. We observe that by a dilation argument, the maximal function $\mathcal{M}$ is not bounded from $L^p(\mathbb{R}^2)$ to $ L^q(\mathbb{R}^2)$ if $p\neq q$, which further implies that the maximal functions $\sup_{u\in (0,\infty)}\mathcal{M}^\infty_{u,\gamma}$ and $\sup_{u\in [1,2]}\mathcal{M}^\infty_{u,\gamma}$ are not bounded from $L^p(\mathbb{R}^2)$ to $ L^q(\mathbb{R}^2)$ if $p\neq q$.

Based on this work \cite{LSY}, combining the $L^p(\mathbb{R}^n)$-improving estimates for $\mathcal{\bar{M}}_{\mathbb{S}^{n-1}}$, it is natural to study the $L^p(\mathbb{R}^2)$-improving estimates for $M_{\gamma}$ defined in \eqref{eq:1.00}. Indeed, for some finite type curves, Li, Wang and Zhai \cite{LWZ} considered some similar maximal functions and established corresponding $L^p(\mathbb{R}^2)$-improving estimates. However, our results will include some other curves. For example, $\gamma(t):=t^{\frac{1}{2}}+t$, or $\gamma(t):=t^{\frac{1}{2}}\ln(1+t)$. We now state our main results.

\begin{theorem}\label{thm1}
Assume $\gamma\in C^{N}(0,1]$ with $N\in\mathbb{N}$ large enough, $\lim_{t\rightarrow 0^+}\gamma(t)=0$ and $\gamma$ is monotonic on $(0,1]$. Moreover, $\gamma$ satisfies the following two conditions:
\begin{enumerate}\label{curve gamma}
  \item[\rm(i)] there exist positive constants $\{C^{(j)}_{1}\}_{j=1}^{2}$ such that $|\frac{t^{j}\gamma^{(j)}(t)}{\gamma(t)}|\geq C^{(j)}_{1}$  for any $t\in (0,1]$;
  \item[\rm(ii)] there exist positive constants $\{C^{(j)}_{2}\}_{j=1}^{N}$ such that $|\frac{t^{j}\gamma^{(j)}(t)}{\gamma(t)}|\leq C^{(j)}_{2}$ for any $t\in (0,1]$.
\end{enumerate}
Then, there exists a positive constant $C$ such that for all $f\in L^{p}(\mathbb{R}^{2})$,
\begin{align*}
\left\|M_{\gamma}f\right\|_{L^{q}(\mathbb{R}^{2})} \leq C \|f\|_{L^{p}(\mathbb{R}^{2})},
\end{align*}
if $(\frac{1}{p},\frac{1}{q})\in \Delta\cup \{(0,0)\}$ and $(\frac{1}{p},\frac{1}{q})$ satisfying $1+(1 +\omega)(\frac{1}{q}-\frac{1}{p})>0$. Here and hereafter, $\Delta:=\{(\frac{1}{p},\frac{1}{q}):\ \frac{1}{2p}<\frac{1}{q}\leq \frac{1}{p}, \frac{1}{q}>\frac{3}{p}-1  \}$ and $\omega:=\limsup_{t\rightarrow 0^{+}}\frac{\ln|\gamma(t)|}{\ln t}$.
 \end{theorem}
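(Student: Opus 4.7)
The plan is to reduce, via a dyadic decomposition in $t$ followed by an anisotropic affine rescaling, the $L^p\to L^q$ bound for $M_\gamma$ to a \emph{uniform} $L^p$-improving estimate for maximal operators attached to normalized curves of non-vanishing curvature, and then to handle this base case via a Fourier integral operator / local smoothing argument in the spirit of Mockenhaupt--Seeger--Sogge, Schlag, and Lee. Concretely, I would pick a smooth partition of unity adapted to the scales $t\in[2^{-k-1},2^{-k}]$ and write $M_\gamma f\ls \sum_{k\geq 0} M_k f$, where $M_k$ restricts the $t$-integration to the $k$th scale. Setting $\lambda_k:=2^{-k}$ and $\mu_k:=\gamma(\lambda_k)$, the substitution $t=\lambda_k s$ together with the anisotropic dilation $x_1=\lambda_k y_1$, $x_2=\mu_k y_2$ converts $M_k$ into $\lambda_k\,\widetilde M_{\gamma_k}$, where $\widetilde M_{\gamma_k}$ is the analogous $\sup_{u\in[1,2]}$ maximal operator built on the normalized curve $\gamma_k(s):=\gamma(\lambda_k s)/\mu_k$, $s\in[1/2,1]$. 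Tracking $L^p$ and $L^q$ norms through the two changes of variables gives
\[
\|M_k f\|_{L^q(\mathbb{R}^2)}\ls \lambda_k^{1+\frac{1}{q}-\frac{1}{p}}|\mu_k|^{\frac{1}{q}-\frac{1}{p}}\bigl\|\widetilde M_{\gamma_k}\bigr\|_{L^p\to L^q}\|f\|_{L^p(\mathbb{R}^2)}.
\]
Crucially, hypotheses (i)--(ii) are preserved by $\gamma_k$ with the \emph{same} constants, so $\gamma_k\in C^N([1/2,1])$ is uniformly smooth and $|\gamma_k''(s)|\gs 1$.

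The main analytic step is then the uniform base case $\bigl\|\widetilde M_{\gamma_k}\bigr\|_{L^p\to L^q}\ls 1$ for $(\f{1}{p},\f{1}{q})\in\Delta\cup\{(0,0)\}$. The underlying averaging operator
\[
A_u^{k}g(y)=\int g(y_1-us,y_2-u\gamma_k(s))\chi(s)\,ds
\]
is a Fourier integral operator in $(y,u)$ with phase $y\cdot\xi-u(s\xi_1+\gamma_k(s)\xi_2)$. Because $|\gamma_k''|\gs 1$ uniformly, stationary phase in $s$ shows its Fourier multiplier decays like $|\xi|^{-1/2}$ in the conic region $|\xi_2|\sim|\xi|$, and the canonical relation satisfies the cinematic curvature condition with constants depending only on those in (i)--(ii). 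Schlag's local smoothing / $L^p$-improving estimates (together with Lee's endpoint refinements) then give, uniformly in $k$,
\[
\|A_u^{k}g\|_{L^q(\mathbb{R}^2\times[1,2])}+\|\partial_u A_u^{k}g\|_{L^q(\mathbb{R}^2\times[1,2])}\ls \|g\|_{L^p(\mathbb{R}^2)}
\]
for $(\f{1}{p},\f{1}{q})\in\Delta$, and a standard Sobolev-embedding trick in $u$ converts this into the desired maximal bound. The vertex $(0,0)$ follows from the trivial $L^\infty\to L^\infty$ estimate.

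Summing the scaled estimates completes the proof. By the definition of $\omega=\dlimsup_{t\to 0^+}\ln|\gamma(t)|/\ln t$, for every $\epsilon>0$ one has $|\mu_k|\geq\lambda_k^{\omega+\epsilon}$ for all sufficiently large $k$; combined with $\f{1}{q}-\f{1}{p}\leq 0$ on $\Delta$, this yields
\[
\|M_\gamma f\|_{L^q(\mathbb{R}^2)}\ls \sum_{k\geq 0} 2^{-k\left[1+(1+\omega+\epsilon)\(\frac{1}{q}-\frac{1}{p}\)\right]}\|f\|_{L^p(\mathbb{R}^2)},
\]
whose geometric series converges precisely when $1+(1+\omega)(\f{1}{q}-\f{1}{p})>0$, upon choosing $\epsilon$ small enough, which is exactly the theorem's hypothesis. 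The hardest part of the argument, and where I expect the bulk of the technical work to lie, is the \emph{uniform} base case: one must verify that the cinematic curvature of the FIO attached to $\gamma_k$ is bounded below purely in terms of the structural constants $\{C_1^{(j)},C_2^{(j)}\}$, so that Schlag's range $\Delta$ and Lee's critical line $\f{1}{q}=\f{3}{p}-1$ are attained with operator-norm bounds independent of $k$. A secondary subtlety is that the limsup definition of $\omega$ gives the lower bound on $|\mu_k|$ only up to exceptional scales; the assumed monotonicity of $\gamma$ on $(0,1]$ is what keeps those scales under control when summing.
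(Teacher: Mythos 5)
Your proposal follows essentially the same route as the paper: dyadic decomposition in $t$, anisotropic rescaling to the normalized curves $\gamma(2^jt)/\gamma(2^j)$ (whose uniform derivative bounds replace your "same constants" claim), a uniform local smoothing estimate for the associated Fourier integral operator (the paper invokes Beltran--Hickman--Sogge and Lee rather than Schlag, and verifies the cinematic curvature conditions by explicit computation), the Sobolev-embedding trick in $u$, and the geometric summation governed by $\omega$ exactly as in your final display. One caveat: your displayed bound $\|\partial_u A^k_u g\|_{L^q(\mathbb{R}^2\times[1,2])}\lesssim\|g\|_{L^p(\mathbb{R}^2)}$ cannot hold as stated, since $\partial_u$ costs a full power of the frequency; the paper first performs a Littlewood--Paley decomposition in $|\xi|$ and runs the Sobolev trick frequency by frequency, where the local smoothing gain $2^{-\mu k}$ with $\mu>\frac{1}{q}$ absorbs the resulting $2^{k/q}$ loss --- this is precisely where the constraint $\frac{1}{q}>\frac{1}{2p}$ defining $\Delta$ enters.
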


We show the necessity of the regions of $(\frac{1}{p}, \frac{1}{q})$ in Theorem \ref{thm1}, which means our result Theorem \ref{thm1} is sharp except for some borderline cases.

\begin{theorem}\label{thm2}
Let $\gamma$ be defined as above. Then, there exists a positive constant $C$ such that for all $f\in L^{p}(\mathbb{R}^{2})$, the estimate
\begin{align*}
\left\|M_{\gamma}f\right\|_{L^{q}(\mathbb{R}^{2})} \leq C \|f\|_{L^{p}(\mathbb{R}^{2})}
\end{align*}
holds, only if the following conditions are satisfied:
\begin{enumerate}\label{necessary}
  \item[\rm(i)] $(\frac{1}{p},\frac{1}{q})$ satisfy $\frac{1}{2p}\leq\frac{1}{q}$;
  \item[\rm(ii)] $(\frac{1}{p},\frac{1}{q})$ satisfy $\frac{1}{q}\leq \frac{1}{p}$;
  \item[\rm(iii)] $(\frac{1}{p},\frac{1}{q})$ satisfy $\frac{1}{q}\geq\frac{3}{p}-1$;
  \item[\rm(iv)] $(\frac{1}{p},\frac{1}{q})$ satisfy $1+(1 +\omega)(\frac{1}{q}-\frac{1}{p})\geq0$.
\end{enumerate}
 \end{theorem}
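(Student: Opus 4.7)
The plan is to establish each of the four necessary conditions by constructing a family of test functions $f_\delta$ (typically characteristic functions of sets), and comparing $\|f_\delta\|_{L^p}$ with a lower bound on $\|M_\gamma f_\delta\|_{L^q}$ as the scaling parameter $\delta$ tends to a limit. By the symmetry $x_2 \mapsto -x_2$ we may assume $\gamma \geq 0$. A useful preliminary consequence of hypotheses (i) and (ii) on $\gamma$ is the doubling-type estimate $\gamma(\lambda t) \asymp \gamma(t)$ for $\lambda$ in any compact subset of $(0, \infty)$, obtained by integrating $C_1^{(1)} \leq t\gamma'(t)/\gamma(t) \leq C_2^{(1)}$.

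Condition (ii) follows by taking $f_R = \chi_{[0,R]^2}$ and letting $R \to \infty$: since $\gamma$ is bounded on $[0,1]$, one has $M_\gamma f_R \geq 1$ on a cube of side $\sim R$, so $R^{2/q} \lesssim R^{2/p}$. Condition (iii) is a Knapp-type argument analogous to the one for the circular maximal function in \cite{Sch}: fix an interior $t_0 \in (0,1)$ with $t_0 \gamma'(t_0) \neq \gamma(t_0)$ (possible because $\gamma$ is not linear, by the curvature hypothesis $\gamma''(t_0) \neq 0$) and let
\begin{align*}
R_\delta := \{(t_0, \gamma(t_0)) + (\lambda, \mu) : |\lambda| \leq \delta/2,\ |\mu - \gamma'(t_0) \lambda| \leq \delta^2/2\}
\end{align*}
be the $\delta \times \delta^2$ parallelogram tangent to the curve $(t, \gamma(t))$ at $(t_0, \gamma(t_0))$. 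A Taylor expansion at $t_0$ shows that for $x$ in a $\delta \times \delta^2$ slab around $(1+u)(t_0, \gamma(t_0))$ the integral $\int_0^1 \chi_{R_\delta}(x_1 - ut, x_2 - u\gamma(t))\,dt$ is $\gtrsim \delta$; as $u$ varies over $[1,2]$ these slabs sweep out a set of measure $\sim \delta$ (the transversality $t_0 \gamma'(t_0) \neq \gamma(t_0)$ ensures the translation $(1+u)(t_0,\gamma(t_0))$ is not along the long axis), hence $\delta \cdot \delta^{1/q} \lesssim \delta^{3/p}$ yields (iii). For (i), fix $u_0 \in [1,2]$ and let $A_\delta$ be the $\delta$-tube around $\{-u_0(t, \gamma(t)): t \in [0,1]\}$; for $x \in B_{\delta/2}(0)$, taking $s = t$ in the defining parametrization of $A_\delta$ shows $\int_0^1 \chi_{A_\delta}(x_1 - u_0 t, x_2 - u_0 \gamma(t))\,dt = 1$, so $M_\gamma \chi_{A_\delta} \geq 1$ on a set of measure $\sim \delta^2$, and comparing $\delta^{2/q} \lesssim \delta^{1/p}$ yields $1/q \geq 1/(2p)$.

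Condition (iv) is the new obstruction, arising from the degeneracy of $\gamma$ at the origin. By definition of $\omega$ there is a sequence $t_k \to 0^+$ along which $\omega_k := \ln \gamma(t_k)/\ln t_k \to \omega$. Set $\delta_k := t_k$, $\eta_k := \gamma(t_k)$, and $f_k := \chi_{[0, \delta_k] \times [0, \eta_k]}$, so that $\|f_k\|_{L^p} = (\delta_k \eta_k)^{1/p}$. The doubling property ensures that for $x$ in the sub-rectangle $[\delta_k/2, \delta_k] \times [\eta_k/2, \eta_k]$, the set $\{t \in [0,1]: x_1 - t \in [0, \delta_k] \text{ and } x_2 - \gamma(t) \in [0, \eta_k]\}$ has measure $\gtrsim \delta_k$, hence $M_\gamma f_k \gtrsim \delta_k$ on a set of measure $\sim \delta_k \eta_k$. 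The boundedness hypothesis therefore reads $\delta_k (\delta_k \eta_k)^{1/q} \lesssim (\delta_k \eta_k)^{1/p}$; using $\delta_k \eta_k = \delta_k^{1 + \omega_k}$ this rearranges to $\delta_k^{1 - (1 + \omega_k)(1/p - 1/q)} \lesssim 1$. Letting $k \to \infty$ (so $\omega_k \to \omega$ and $\delta_k \to 0$) forces $(1+\omega)(1/p - 1/q) \leq 1$, which is equivalent to (iv).

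The main obstacle is extracting the sharp measure estimate $\sim \delta_k \eta_k$ in case (iv) from only the $\limsup$ information on $\gamma$, rather than from a clean power-law $\gamma(t) = t^\omega$; this is precisely where the doubling-type regularity deduced from hypotheses (i) and (ii) is used, to propagate the single-scale behavior at $t = t_k$ to a uniform neighborhood $[0, c\delta_k]$. A secondary subtlety is the computation of the swept set in (iii), where the transversal motion of the Knapp parallelogram as $u$ varies must be tracked carefully to produce a set of measure $\sim \delta$ rather than the trivial $\sim \delta^3$.
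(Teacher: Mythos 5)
Your proposal is correct and follows essentially the same route as the paper: a tube around the curve for (i), translation invariance for (ii), a Knapp-type $\delta\times\delta^2$ slab swept transversally by the dilation parameter $u\in[1,2]$ for (iii), and rectangles at scales $t_k\to 0^+$ chosen from the $\limsup$ defining $\omega$ for (iv). The only differences are cosmetic (the paper places the Knapp cap at $t=1$ with a discrete family of $u_i$'s, and in (iv) enlarges the target rectangle so that monotonicity replaces your appeal to doubling), so no further comment is needed.
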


We give some remarks about these results.

\begin{remark}\label{remark 5}
Let us explain why we take supremum over $u\in[1,2]$ in $M_{\gamma}$ defined in \eqref{eq:1.00}.
\begin{enumerate}
  \item[$\bullet$] Suppose that the estimate $\|M^{*}_{\gamma}f\|_{L^{q}(\mathbb{R}^{2})} \lesssim\|f\|_{L^{p}(\mathbb{R}^{2})}$
holds for some $1\leq p,q\leq\infty$, where
\begin{align*}
M^{*}_{\gamma}f(x_1,x_2):=\sup_{u\in (0,\infty)}\left|\int_{0}^{1}f(x_1-ut,x_2-u \gamma(t))\,\textrm{d}t\right|.
\end{align*}
One must then have $p=q$. Indeed, if $f$ is replaced by $f(\lambda\cdot)$, then we obtain $\|f(\lambda\cdot)\|_{L^{p}(\mathbb{R}^{2})}=\lambda^{-\frac{2}{p}}\|f\|_{L^{p}(\mathbb{R}^{2})}$ and $\|M^{*}_{\gamma}(f(\lambda\cdot))\|_{L^{q}(\mathbb{R}^{2})}
=\|M^{*}_{\gamma}f(\lambda\cdot)\|_{L^{q}(\mathbb{R}^{2})}=\lambda^{-\frac{2}{q}}\|M^{*}_{\gamma}f\|_{L^{q}(\mathbb{R}^{2})}$, which further leads to $\lambda^{-\frac{2}{q}}\|M^{*}_{\gamma}f\|_{L^{q}(\mathbb{R}^{2})} \lesssim\lambda^{-\frac{2}{p}}\|f\|_{L^{p}(\mathbb{R}^{2})}$. Let $\lambda\rightarrow 0$ and $\lambda\rightarrow \infty$, one must have $p=q$ as desired.
  \item[$\bullet$] On the other hand, let
\begin{align*}
M^{**}_{\gamma}f(x_1,x_2):=\sup_{u\in (0,1)}\left|\int_{0}^{1}f(x_1-ut,x_2-u \gamma(t))\,\textrm{d}t\right|.
\end{align*}
Note that $M^{**}_{\gamma}$ commutes with translations, then it is not bounded from $L^p(\mathbb{R}^2)$ to $L^q(\mathbb{R}^2)$ for all $p>q$. Suppose that the estimate $\|M^{**}_{\gamma}f\|_{L^{q}(\mathbb{R}^{2})} \lesssim\|f\|_{L^{p}(\mathbb{R}^{2})}$
holds for some $1\leq p\leq q\leq\infty$, then $p=q$. Indeed, after a simple calculation, we know that $$\left\|M^{**}_{\gamma}(f(\lambda\cdot))\right\|_{L^{q}(\mathbb{R}^{2})}=\lambda^{-\frac{2}{q}}\left\| \sup_{u\in (0,\lambda)}\left|\int_{0}^{1}f(x_1-ut,x_2-u \gamma(t))\,\textrm{d}t\right| \right\|_{L^{q}(\mathbb{R}^{2})},$$
it further follows that $$\left\|M^{*}_{\gamma}f\right\|_{L^{q}(\mathbb{R}^{2})}\leq \liminf\limits_{\lambda\rightarrow\infty} \left\| \sup_{u\in (0,\lambda)}\left|\int_{0}^{1}f(x_1-ut,x_2-u \gamma(t))\,\textrm{d}t\right| \right\|_{L^{q}(\mathbb{R}^{2})}\lesssim \liminf\limits_{\lambda\rightarrow\infty}\lambda^{\frac{2}{q}-\frac{2}{p}}\|f\|_{L^{p}(\mathbb{R}^{2})}=0 $$
if $q>p$. It is a contradiction. Therefore, one must have $p=q$.
\end{enumerate}
In conclusion, it is necessary to take supremum over $u\in[1,2]$ in $M_{\gamma}$.
\end{remark}

\begin{remark}\label{remark 1}
The conditions (i) and (ii) of Theorem \ref{thm1} imply the following doubling condition of the curve (see \cite{LSY}), i.e., it holds that
 \begin{align*}
e^{C^{(1)}_{1}/2}\leq \frac{\gamma(2t)}{\gamma(t)}\leq e^{C^{(1)}_{2}}
  \end{align*}
and
\begin{align*}
\textrm{either} \quad  e^{C^{(2)}_{1}/2 C^{(1)}_{2}}\leq \frac{\gamma'(2t)}{\gamma'(t)}\leq e^{C^{(2)}_{2}/C^{(1)}_{1}}\quad  \textrm{or}\quad e^{-C^{(2)}_{2}/C^{(1)}_{1}}\leq \frac{\gamma'(2t)}{\gamma'(t)}\leq e^{-C^{(2)}_{1}/2 C^{(1)}_{2}}.
  \end{align*}
The doubling condition has already been used to obtain the $L^p(\mathbb{R}^2)$ boundedness of the maximal function $\mathcal{M}$ defined in \eqref{eq:1.03}; see, for example, \cite{CCCD,CRu}.
\end{remark}

\begin{remark}\label{remark 2}
Let us list some examples of curves satisfying the conditions (i) and (ii) of Theorem \ref{thm1}. We may add a characteristic function $\chi_{(0,\varepsilon_0]}(t)$ to these curves if necessary, where $\varepsilon_0$ is small enough.
\begin{enumerate}
\item[\rm(1)] $\gamma_1(t):=t^d$, where $d\in(0,\infty)$ and $d\neq1$;
\item[\rm(2)] $\gamma_2(t):=t^d\ln(1+t)$, where $d\in(0,\infty)$;
\item[\rm(3)] $\gamma_3(t):=a_dt^d+a_{d+1}t^{d+1}+\cdots+a_{d+m}t^{d+m}$, where $d\geq 2$, $d\in\mathbb{N}$ and $m\in\mathbb{N}_0$ , $a_d\neq 0$, i.e., $\gamma_3$ is a polynomial of degree at least $d\geq 2$ with no linear term and constant term;
\item[\rm(4)] $\gamma_4(t):=\sum_{i=1}^{d}\beta_i t^{\alpha_i}$, where $\alpha_i\in(0,\infty)$ for all $i=1,2, \cdots, d$, $\min_{i\in\{1,2, \cdots, d\}}\{\alpha_i\}_{i=1}^{d}\neq 1$ and $d\in \mathbb{N}$;
\item[\rm(5)] $\gamma_5(t):=1-\sqrt{1-t^2}$, or $t\sin t$, or $t-\sin t$, or $1-\cos t$, or $e^t-t-1$;
\item[\rm(6)] $\gamma_6(t)$ is a smooth function on $[0,1]$ satisfying $\gamma(0)=\gamma'(0)=\cdots=\gamma^{(d-1)}(0)=0$ and $\gamma^{(d)}(0)\neq0$, where $d\geq 2$ and $d\in\mathbb{N}$. Note that $\gamma_6$ is finite type $d$ at $0$ (see, \cite{Iose}), $\gamma_3$ and $\gamma_5$ are special cases of $\gamma_6$.
\end{enumerate}
\end{remark}

\begin{remark}\label{remark 3}
If $\gamma(t):=\gamma_1(t)=t^d$ with $d\in(0,\infty)$ and $d\neq1$, it follows that $1+(1 +\omega)(\frac{1}{q}-\frac{1}{p})>0$ is equivalent to
$(\frac{1}{p},\frac{1}{q})\in\{(\frac{1}{p},\frac{1}{q}):\ \frac{1}{q}>\frac{1}{p}- \frac{1}{d+1}\}$. Therefore, we can express the regions of $(\frac{1}{p}, \frac{1}{q})$ in Theorem \ref{thm1} as in Figure \ref{Figure:1}. It is easy to see that the regions of $(\frac{1}{p}, \frac{1}{q})$ in Theorem \ref{thm1} decrease as $d$ increases. In particular, when $d\in(0,4]$ and $d\neq1$, the regions of $(\frac{1}{p}, \frac{1}{q})$ in Theorem \ref{thm1} is $\Delta\cup \{(0,0)\}$. On the other hand, by a simple calculation, we can also express the regions of $(\frac{1}{p}, \frac{1}{q})$ in Theorem \ref{thm1} for $\gamma=\gamma_3$ as in Figure \ref{Figure:1}.
\begin{figure}[htbp]
  \centering
  \includegraphics[width=3.68in]{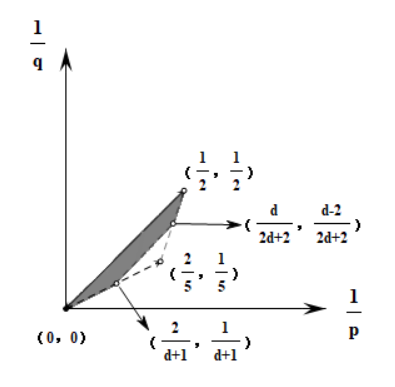}\\
  \caption{Regions of $(\frac{1}{p}, \frac{1}{q})$ in Theorem \ref{thm1} for $\gamma=\gamma_1$ or $\gamma=\gamma_3$.}\label{Figure:1}
\end{figure}
\end{remark}

We like to mention several ingredients in the proofs of Theorems \ref{thm1} and \ref{thm2}. For the general curve $\gamma$, since it does not satisfy the following special property:  $\gamma(ab)=\gamma(a)\gamma(b)$ for any $a>0$ and $b>0$, then the corresponding case will be more complicated than the homogeneous curve case. We overcome this difficulty by replacing $\gamma(2^j t)$ as $\Gamma_{j}(t):=\frac{\gamma(2^jt)}{\gamma(2^j)}$ and reduce our estimate to \eqref{eq:2.1}. From the following Lemma \ref{lemma 2.1}, we have that $\Gamma_j$ behaves uniformly in the parameter $j$. On the other hand, by using the theory of oscillatory integrals and stationary phase estimates, it is enough to obtain a local smoothing estimate for
\begin{align*}
A_{j,k}f(x,u):=\int_{\mathbb{R}^{2}} e^{i\Psi(x,u,\xi)}a_{j,k}(x,u,\xi) \hat{f}(\xi)\,\textrm{d}\xi,
\end{align*}
defined in \eqref{eq:2.22}, which is essentially a Fourier integral operator with phase function $x\cdot\xi-u|\xi|$. The local smoothing estimate for $A_{j,k}$ will be reduced to a decoupling inequality for cones due to Bourgain and Demeter \cite{BoD}. In this paper, we establish a local smoothing estimate for $A_{j,k}$ by interpolation between \eqref{eq:2.30} with \eqref{eq:2.29} and \eqref{eq:2.27}. Our proofs of \eqref{eq:2.30}, \eqref{eq:2.29} and \eqref{eq:2.27} rely on local smoothing estimates obtained in Beltran, Hickman and Sogge \cite{B} and Lee \cite{Lee}. It is worth noticing that verifying the so called cinematic curvature condition in \cite{B,Lee} is another highlight of this paper, which need some complicated calculations. In order to obtain the necessity of the regions of $(\frac{1}{p}, \frac{1}{q})$, we construct some examples and introduce a slightly different
notation $\omega=\limsup_{t\rightarrow 0^{+}}\frac{\ln|\gamma(t)|}{\ln t}$ to the general plane curve $(t,\gamma(t))$, which shows that the regions of $(\frac{1}{p}, \frac{1}{q})$ is related to $\gamma$.

The layout of the paper is as follows. In Section 2, we show Theorem \ref{thm1}, whose proof relies  heavily on the local smoothing estimates. In Section 3, we consider the necessary conditions for Theorem \ref{thm1}, i.e., Theorem \ref{thm2}. We will see that the $L^p(\mathbb{R}^2)\rightarrow L^q(\mathbb{R}^2)$ boundedness in Theorem \ref{thm1} is almost sharp except for some endpoints.

Finally, we make some convention on notation. Throughout this paper, the letter ``$C$"  will denote a \emph{positive constant}, independent of the essential variables, but whose value may change at each occurrence. $a\lesssim b$ (or $a\gtrsim b$) means that there exists a positive constant $C$ such that $a\leq Cb$ (or $a\geq Cb$). $a\approx b$ means $a\lesssim b$ and $b\lesssim a$. For any $x\in\mathbb{R}^n$ and $r\in(0,\infty)$, let $B(x,r):=\{y\in\mathbb{R}^n:\ |x-y|<r\}$ and $B^{\complement}(x,r)$
be its \emph{complement} in $\mathbb{R}^n$. $\hat{f}$ and $f^{\vee}$ shall denote the \emph{Fourier transform} and the \emph{inverse Fourier transform} of $f$, respectively. For $1<q\leq\infty$, we will denote $q'$ the \emph{adjoint number} of $q$, i.e., ${1}/{q}+ {1}/{q'}=1$. Let $\mathbb{N}:=\{1,\,2,...\}$ and $\mathbb{N}_0:=\mathbb{N}\bigcup\{0\}$, $\mathbb{R}^{+}:=(0,\infty)$. For any set $E$, we use $\chi_E$ to denote the \emph{characteristic function} of $E$.

\section{Proof of Theorem \ref{thm1}}

In this section, we devote to the proof of Theorem \ref{thm1}. We first introduce some lemmas which will be used in the proof of Theorem \ref{thm1}.

\begin{lemma}\label{lemma 2.1}
(\cite[Lemma 2.2]{LSY}) Let $t\in [\frac{1}{2},2]$ and $\Gamma_{j}(t):=\frac{\gamma(2^jt)}{\gamma(2^j)}$ with $j\in \mathbb{Z}$. We have the following inequalities hold uniformly in $j$,
\begin{enumerate}
  \item[\rm(i)] $  e^{-C^{(1)}_{2}}\leq\Gamma_{j}(t)\leq e^{C^{(1)}_{2}}$;
  \item[\rm(ii)] $ \frac{C^{(1)}_1}{2e^{C^{(1)}_{2}}}\leq|\Gamma_{j}'(t)|\leq 2e^{C^{(1)}_{2}}C^{(1)}_2$;
  \item[\rm(iii)] $\frac{C^{(2)}_1}{4e^{C^{(1)}_{2}}}\leq|\Gamma_{j}''(t)|\leq 4e^{C^{(1)}_{2}}C^{(2)}_{2}$;
  \item[\rm(iv)] $|\Gamma_{j}^{(k)}(t)|\leq  2^{k} e^{C^{(1)}_{2}}C^{(k)}_{2}$ \quad \textrm{for} \textrm{all} $2\leq k\leq N$ \textrm{and} $k\in \mathbb{N}$;
  \item[\rm(v)] $|((\Gamma_{j}')^{-1})^{(k)}(t)|\lesssim 1$ \quad \textrm{for} \textrm{all} $0\leq k< N$  \textrm{and} $k\in \mathbb{N}$, \textrm{where} $(\Gamma_{j}')^{-1}$ \textrm{is} \textrm{the} \textrm{inverse} \textrm{function} \textrm{of} $\Gamma_{j}'$.
\end{enumerate}
\end{lemma}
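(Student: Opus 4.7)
The plan is to reduce each inequality to an elementary manipulation of the scaling identity $\Gamma_j^{(k)}(t)=2^{jk}\gamma^{(k)}(2^jt)/\gamma(2^j)$ and to exploit the fact that conditions (i)--(ii) of Theorem \ref{thm1}, written as two-sided bounds on $t^k\gamma^{(k)}(t)/\gamma(t)$, are scale invariant. Since $\gamma$ is monotonic on $(0,1]$ with $\lim_{t\to 0^+}\gamma(t)=0$, the function $\gamma$ has constant sign near $0$, so $\Gamma_j(t)>0$ whenever $2^jt\in(0,1]$, which lets us replace $\gamma$ by $|\gamma|$ in the computations without loss.

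For part (i), I would set $\phi(s):=\ln|\gamma(e^s)|$; hypothesis (ii) at $j=1$ gives $|\phi'(s)|=|e^s\gamma'(e^s)/\gamma(e^s)|\leq C_2^{(1)}$. Since $\ln \Gamma_j(t)=\phi(j\ln 2+\ln t)-\phi(j\ln 2)$ and $|\ln t|\leq \ln 2<1$ for $t\in[1/2,2]$, integrating $\phi'$ yields the two-sided bound $|\ln \Gamma_j(t)|\leq C_2^{(1)}$, which is exactly (i).

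For parts (ii)--(iv), the key is the decomposition
\[
\Gamma_j^{(k)}(t)=\frac{1}{t^k}\cdot\frac{(2^jt)^k\gamma^{(k)}(2^jt)}{\gamma(2^jt)}\cdot\Gamma_j(t),
\]
which isolates three factors we can estimate separately: the power $t^{-k}$ lies in $[2^{-k},2^k]$ on $[1/2,2]$; the middle factor is controlled for $k=1,2$ by combining the lower bound in hypothesis (i) and the upper bound in hypothesis (ii) of Theorem \ref{thm1}, and for $3\leq k\leq N$ only the upper bound in hypothesis (ii) is needed; the last factor is already controlled by part (i) just proved. Tracking the constants through this product recovers the stated numerical coefficients exactly.

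Finally, for part (v), I would argue by induction on $k$. By part (ii), $\Gamma_j'$ is monotone on $[1/2,2]$ with derivative $\Gamma_j''$ bounded away from $0$ by part (iii), so $(\Gamma_j')^{-1}$ exists and its first derivative, $1/\Gamma_j''\circ(\Gamma_j')^{-1}$, is uniformly bounded in $j$. Higher derivatives of the inverse expand via the Fa\`a di Bruno formula into rational expressions whose numerators are polynomials in $\Gamma_j^{(2)}(s),\ldots,\Gamma_j^{(k+1)}(s)$ and whose denominators are powers of $\Gamma_j''(s)$, evaluated at $s=(\Gamma_j')^{-1}(t)$; parts (iii) and (iv) then furnish $j$-independent bounds on every piece. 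This last step is the only one requiring genuine care, and I expect it to be the main place for bookkeeping rather than any deep obstacle.
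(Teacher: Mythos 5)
Your proof is correct, and it is worth noting that the paper itself gives no argument for this lemma: it is imported verbatim as \cite[Lemma 2.2]{LSY}, so your self-contained derivation is a genuine addition rather than a reproduction. The core identity
\begin{align*}
\Gamma_j^{(k)}(t)=\frac{1}{t^{k}}\cdot\frac{(2^{j}t)^{k}\gamma^{(k)}(2^{j}t)}{\gamma(2^{j}t)}\cdot\Gamma_j(t)
\end{align*}
is exactly the right way to exploit the scale invariance of hypotheses (i)--(ii) of Theorem \ref{thm1}, and your bookkeeping reproduces the stated constants exactly (e.g.\ $t^{-1}\geq\tfrac12$, the middle factor $\geq C_1^{(1)}$, and $\Gamma_j(t)\geq e^{-C_2^{(1)}}$ give precisely $C_1^{(1)}/(2e^{C_2^{(1)}})$ in (ii)). The logarithmic-derivative argument for (i) is also sound, since $\gamma$ has constant sign on $(0,1]$ by monotonicity and $\lim_{t\to0^+}\gamma(t)=0$, and $|\ln t|\leq\ln 2<1$ on $[\tfrac12,2]$.

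Two small points to tighten. First, in part (v) the invertibility of $\Gamma_j'$ on $[\tfrac12,2]$ does not follow from part (ii): a two-sided bound on $|\Gamma_j'|$ makes $\Gamma_j$ strictly monotone, not $\Gamma_j'$. What you need is part (iii): $\Gamma_j''$ is continuous and nonvanishing, hence of constant sign, hence $\Gamma_j'$ is strictly monotone; you invoke (iii) in the same sentence, so this is a one-word correction of attribution. Second, for the inverse-function derivatives note that $((\Gamma_j')^{-1})^{(k)}$ involves $\Gamma_j^{(2)},\dots,\Gamma_j^{(k+1)}$ over a power of $\Gamma_j''$, so the restriction $k<N$ in the statement is exactly what makes (iv) applicable to the top-order term; it is worth saying this explicitly so the reader sees why the range of $k$ is what it is. (There is also the implicit domain restriction that $\Gamma_j(t)$ only makes sense when $2^{j}t\in(0,1]$; this affects the lemma as stated just as much as your proof, and is harmless in the application where $j\leq 0$.)
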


The following lemma is well known.

\begin{lemma}\label{lemma 2.2}
(see, for example, \cite[Lemma 2.4.2]{So} or \cite[Lemma 43]{BHS}) Suppose that $F$ is $C^1(\mathbb{R})$. Then, if $q>1$ and $\frac{1}{q}+\frac{1}{q'}=1$,
\begin{align*}
\sup_{u\in [1,2]}|F(u)|^q\leq |F(1)|^q+q\left(\int_1^2|F(u)|^q\,\textrm{d}u \right)^{\frac{1}{q'}}\left(\int_1^2|F'(u)|^q\,\textrm{d}u \right)^{\frac{1}{q}}.
\end{align*}
\end{lemma}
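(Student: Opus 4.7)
The plan is to prove Lemma 2.2 by the standard fundamental theorem of calculus followed by Hölder's inequality. Fix any $u_0\in [1,2]$. Since $F\in C^1(\mathbb{R})$ and $q>1$, the function $u\mapsto |F(u)|^q$ is $C^1$ with
\begin{equation*}
\frac{\mathrm{d}}{\mathrm{d}u}|F(u)|^q = q|F(u)|^{q-1}\,\mathrm{sgn}(F(u))\,F'(u),
\end{equation*}
where at points $F(u)=0$ the derivative is understood as $0$, which is consistent with continuity since $q-1>0$. Applying the fundamental theorem of calculus between $1$ and $u_0$ and taking absolute values yields
\begin{equation*}
|F(u_0)|^q \leq |F(1)|^q + q\int_1^{u_0} |F(u)|^{q-1}\,|F'(u)|\,\mathrm{d}u \leq |F(1)|^q + q\int_1^{2} |F(u)|^{q-1}\,|F'(u)|\,\mathrm{d}u.
\end{equation*}

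Next I would estimate the integral on the right by Hölder's inequality with exponents $q'$ and $q$. Since $q' = q/(q-1)$, we have $(q-1)q' = q$, so
\begin{equation*}
\int_1^{2} |F(u)|^{q-1}\,|F'(u)|\,\mathrm{d}u \leq \left(\int_1^{2} |F(u)|^{(q-1)q'}\,\mathrm{d}u\right)^{1/q'}\left(\int_1^{2} |F'(u)|^q\,\mathrm{d}u\right)^{1/q} = \left(\int_1^{2} |F(u)|^{q}\,\mathrm{d}u\right)^{1/q'}\left(\int_1^{2} |F'(u)|^q\,\mathrm{d}u\right)^{1/q}.
\end{equation*}
Combining the two displays gives
\begin{equation*}
|F(u_0)|^q \leq |F(1)|^q + q\left(\int_1^{2} |F(u)|^{q}\,\mathrm{d}u\right)^{1/q'}\left(\int_1^{2} |F'(u)|^q\,\mathrm{d}u\right)^{1/q},
\end{equation*}
and since the right-hand side is independent of $u_0$, taking the supremum over $u_0\in[1,2]$ completes the proof.

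There is no real obstacle here; the only mildly delicate point is the differentiation of $|F(u)|^q$ at zeros of $F$, which is harmless for $q>1$. If one prefers a proof that avoids any discussion of $\mathrm{sgn}(F)$, one may instead differentiate $(F(u)^2+\varepsilon)^{q/2}$, carry out the same Hölder estimate with explicit bound $q(F(u)^2+\varepsilon)^{(q-1)/2}|F'(u)|$, and send $\varepsilon\to 0^+$ by dominated convergence; the constant $q$ and the structure of the bound are preserved in the limit.
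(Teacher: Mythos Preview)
Your proof is correct and is precisely the standard argument: fundamental theorem of calculus applied to $|F(u)|^q$ followed by H\"older's inequality with exponents $q'$ and $q$. The paper does not supply its own proof of this lemma, citing it as well known from \cite[Lemma 2.4.2]{So} and \cite[Lemma 43]{BHS}, and your argument is exactly what one finds in those references.
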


\begin{lemma}\label{lemma 2.10}
Recall that $\omega=\limsup_{t\rightarrow 0^{+}}\frac{\ln|\gamma(t)|}{\ln t}$. Then, for all $(\frac{1}{p},\frac{1}{q})$ satisfying $\frac{1}{q}\leq \frac{1}{p}$ and $1+(1 +\omega)(\frac{1}{q}-\frac{1}{p})>0$, we have
\begin{align*}
\sum_{j\leq 0} 2^j|2^j\gamma(2^j)|^{\frac{1}{q}-\frac{1}{p}}<\infty.
\end{align*}
\end{lemma}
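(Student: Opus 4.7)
The plan is to set $a := \frac{1}{q} - \frac{1}{p} \le 0$ and dispose of the trivial case $a = 0$ immediately: the sum then reduces to $\sum_{j \le 0} 2^{j} = 2 < \infty$. Henceforth I assume $a < 0$, so that estimating $|2^{j}\gamma(2^{j})|^{a}$ amounts to bounding $|2^{j}\gamma(2^{j})|$ from \emph{below}, and a smaller value of $|\gamma(2^{j})|$ is exactly what could make the sum blow up.

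The first real step is to convert the limsup definition of $\omega$ into a pointwise lower bound on $|\gamma(2^j)|$. By definition of $\omega = \limsup_{t\to 0^+}\frac{\ln|\gamma(t)|}{\ln t}$, for every $\epsilon > 0$ there exists $\delta = \delta(\epsilon) > 0$ such that $\frac{\ln|\gamma(t)|}{\ln t} < \omega + \epsilon$ for all $t \in (0,\delta)$. Since $\ln t < 0$ on $(0,1)$, multiplying by $\ln t$ reverses this to $|\gamma(t)| > t^{\omega + \epsilon}$ on $(0,\delta)$. Pick $j_0 = j_0(\epsilon) \in \mathbb{N}$ with $2^{-j_0} < \delta$; then for every integer $j \le -j_0$,
\begin{align*}
2^{j}|\gamma(2^{j})| > 2^{j(1 + \omega + \epsilon)}.
\end{align*}
Since $a < 0$, raising to the $a$-th power reverses the inequality, so that
\begin{align*}
2^{j}\bigl|2^{j}\gamma(2^{j})\bigr|^{a} \le 2^{j(1 + a(1 + \omega + \epsilon))}, \qquad j \le -j_0.
\end{align*}

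The second step exploits the strict inequality in the hypothesis. Since $1 + a(1+\omega) > 0$ and $a < 0$, one can choose $\epsilon > 0$ so small that $1 + a(1 + \omega + \epsilon) = (1 + a(1+\omega)) + a\epsilon > 0$; any $\epsilon \in \bigl(0, (1+a(1+\omega))/|a|\bigr)$ does the job. With such a choice, $\sum_{j \le -j_0} 2^{j(1 + a(1+\omega+\epsilon))}$ is a convergent geometric series. The finite initial block $\sum_{-j_0 < j \le 0} 2^{j}|2^{j}\gamma(2^{j})|^{a}$ consists of finitely many terms, each of which is finite because $\gamma$ is nonvanishing on $(0,1]$ (condition (i) of Theorem \ref{thm1} forces $\gamma(t)\ne 0$ for $t\in(0,1]$, since otherwise $|t\gamma'(t)/\gamma(t)|$ would blow up). Adding the two contributions yields the desired finiteness.

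The argument is genuinely elementary; the only point that requires care is the selection of $\epsilon$, and the strict inequality $1 + (1+\omega)a > 0$ in the hypothesis is precisely what leaves room to absorb the $\epsilon$ produced by the limsup. No smoothness or curvature hypothesis on $\gamma$ beyond what is needed to define $\omega$ (and to ensure that $\gamma$ does not vanish on $(0,1]$) is actually used, so I expect no real obstacle in carrying out the plan.
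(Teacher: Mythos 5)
Your proposal is correct and follows essentially the same route as the paper: convert the $\limsup$ into the pointwise lower bound $|\gamma(t)|>t^{\omega+\epsilon}$ near $0$, use the strict inequality $1+(1+\omega)(\frac{1}{q}-\frac{1}{p})>0$ to absorb a small $\epsilon$, and sum the resulting geometric series, with the finitely many remaining terms handled trivially. Your treatment of the case $\frac{1}{q}=\frac{1}{p}$ and of the finiteness of the initial block is slightly more explicit than the paper's, which instead begins by verifying $\omega\in(0,\infty)$ from conditions (i)--(ii); both are minor bookkeeping differences.
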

\begin{proof}[Proof (of Lemma \ref{lemma 2.10}).] We first show $\omega\in (0,\infty)$. Indeed, notice the facts that $\lim_{t\rightarrow 0^+}\gamma(t)=0$ and $C^{(1)}_{1}\leq|\frac{t\gamma'(t)}{\gamma(t)}|\leq C^{(1)}_{2}$ for any $t\in (0,1]$, by l'H\^{o}pital's Rule, it is easy to see that $\omega\in (0,\infty)$. From $1+(1 +\omega)(\frac{1}{q}-\frac{1}{p})>0$, then there exists $\epsilon\in (0,\infty)$ such that $1+(1 +\omega+\epsilon)(\frac{1}{q}-\frac{1}{p})>0$. By the definition of $\omega$, we have $ \inf_{\tau\in (0,1)}\sup_{t\in (0,\tau)}\frac{\ln|\gamma(t)|}{\ln t}=\omega$. Then, for this $\epsilon$, there exists a positive constant $\tau\in (0,1)$ such that $\sup_{t\in (0,\tau)}\frac{\ln|\gamma(t)|}{\ln t}\in [\omega,\omega+\epsilon)$, which implies that $\frac{\ln|\gamma(t)|}{\ln t}<\omega+\epsilon$ for all $t\in (0,\tau)$. Therefore,
\begin{align*}
|\gamma(t)|=t^{\log_t {|\gamma(t)|}}=t^{\frac{\ln |\gamma(t)|}{\ln t}}>t^{\omega+\epsilon}
\end{align*}
for all $t\in (0,\tau)$. By $\frac{1}{q}\leq \frac{1}{p}$, we deduce that
\begin{align*}
\sum_{j\leq 0, ~2^j<\tau} 2^j|2^j\gamma(2^j)|^{\frac{1}{q}-\frac{1}{p}}< \sum_{j\leq 0, ~2^j<\tau} 2^j\left(2^j(2^j)^{\omega+\epsilon}\right)^{\frac{1}{q}-\frac{1}{p}}=\sum_{j\leq 0, ~2^j<\tau} 2^{j\left[1+(1 +\omega+\epsilon)\left(\frac{1}{q}-\frac{1}{p}\right)\right]} <\infty,
\end{align*}
which further leads to $\sum_{j\leq 0} 2^j|2^j\gamma(2^j)|^{\frac{1}{q}-\frac{1}{p}}<\infty$ as required.
\end{proof}

We now turn to the proof of Theorem \ref{thm1}. Since the maximal function
$$M_{\gamma}f(x_1,x_2)=\sup_{u\in [1,2]}\left|\int_{0}^{1}f(x_1-ut,x_2-u \gamma(t))\,\textrm{d}t\right|$$
that we are dealing can be seemed as a positive operator, we may assume that $f\geq 0$. For any $u\in [1,2]$, denote
$$M_{u,\gamma}f(x_1,x_2):=\int_{0}^{1}f(x_1-ut,x_2-u \gamma(t))\,\textrm{d}t,$$
the first step is to break up $M_{u,\gamma}$ into pieces by a standard partition of unity. Let $\psi:\ \mathbb{R}^+\rightarrow\mathbb{R}$ be a smooth function supported on $\{t\in \mathbb{R}:\ \frac{1}{2}\leq t\leq 2\}$ with the property that $0\leq \psi(t)\leq 1$ and $\Sigma_{j\in \mathbb{Z}} \psi_j(t)=1$ for any $t> 0$, where $\psi_j(t):=\psi (2^{-j}t)$. We have
$$M_{u,\gamma}f(x_1,x_2)\leq\sum_{j\leq 0}\int_{0}^{\infty}f(x_1-ut,x_2-u \gamma(t))\psi_j(t)\,\textrm{d}t=: \sum_{j\leq 0} M_{u,\gamma,j}f(x_1,x_2).$$
Via a change of variables, we rewrite $M_{u,\gamma,j}f(x_1,x_2)$ as
$$\int_{0}^{\infty}f(x_1-u2^jt,x_2-u \gamma(2^jt))\psi(t)2^j\,\textrm{d}t.$$

Let
$$\widetilde{M}_{u,\Gamma_j}f(x_1,x_2):=\int_{0}^{\infty}f(x_1-ut,x_2-u \Gamma_j(t))\psi(t)\,\textrm{d}t$$
with $\Gamma_{j}(t)=\frac{\gamma(2^jt)}{\gamma(2^j)}$, and for any $j\in \mathbb{Z}$ define
$$\delta_jf(x_1,x_2):=f(2^jx_1,\gamma(2^j)x_2).$$
Note that
$$|2^j\gamma(2^j)|^{\frac{1}{q}}\|\delta_jf\|_{L^{q}(\mathbb{R}^{2})}=\|f\|_{L^{q}(\mathbb{R}^{2})}$$
and
$$\delta_j\left(\sup_{u\in [1,2]} M_{u,\gamma,j}f\right)=2^j \left(\sup_{u\in [1,2]} \widetilde{M}_{u,\Gamma_j}\right)(\delta_jf),$$
then it is enough to prove

\begin{align}\label{eq:2.1}
\sum_{j\leq 0} 2^j|2^j\gamma(2^j)|^{\frac{1}{q}-\frac{1}{p}} \left\|\sup_{u\in [1,2]} \widetilde{M}_{u,\Gamma_j} \right\|_{L^{p}(\mathbb{R}^{2})\rightarrow L^{q}(\mathbb{R}^{2})}\lesssim 1.
\end{align}

After taking a Fourier transform, we see that
$$\widetilde{M}_{u,\Gamma_j}f(x)=\int_{\mathbb{R}^{2}} \hat{f}(\xi) e^{ix\cdot\xi}H(u, \xi)\,\textrm{d}\xi $$
with $x:=(x_1, x_2)$ and $\xi:=(\xi_1, \xi_2)$, where
$$H(u, \xi):=\int_{\mathbb{R}}  e^{-iu\xi_1 t-i u \xi_2 \Gamma_j(t) }\psi(t)\,\textrm{d}t.$$

Recall that  $\psi:\ \mathbb{R}^+\rightarrow\mathbb{R}$ is a smooth function supported on $\{t\in \mathbb{R}:\ \frac{1}{2}\leq t\leq 2\}$ with $\Sigma_{j\in \mathbb{Z}} \psi_j(t)=1$ for any $t> 0$, let
$$ \widetilde{M}_{u,\Gamma_j,k}f(x):=\int_{\mathbb{R}^{2}} \hat{f}(\xi) e^{ix\cdot\xi}H(u, \xi)\psi_k(u|\xi|)\,\textrm{d}\xi,$$
and
$$\widetilde{M}^0_{u,\Gamma_j}f(x):= \int_{\mathbb{R}^{2}} \hat{f}(\xi) e^{ix\cdot\xi}H(u, \xi)\Psi^0(u|\xi|)\,\textrm{d}\xi$$
with $\Psi^0(t):= \sum_{k\leq 0}\psi_k(t)$, we further make the following decomposition:
\begin{align}\label{eq:2.2}
\widetilde{M}_{u,\Gamma_j}f(x)= \widetilde{M}^0_{u,\Gamma_j}f(x)+ \sum_{k\geq 1} \widetilde{M}_{u,\Gamma_j,k}f(x).
\end{align}

Consider $\sup_{u\in [1,2]} |\widetilde{M}^0_{u,\Gamma_j}f|$. Noting that $\Psi^0$ is a smooth function supported on $\{t\in \mathbb{R}:\ 0\leq t\leq 2\}$, it is easy to see that
$$\left|H(u, \xi)\Psi^0(u|\xi|)\right|\lesssim \frac{1}{(1+|\xi|)^3}.$$
Furthermore, together with Lemma \ref{lemma 2.1} and the fact that $\Psi^0$ is a compactly supported smooth function, enable us to obtain
\begin{align}\label{eq:2.3}
\left|\partial_u^{\alpha}\partial^{\beta}_{\xi}\left(H(u, \xi)\Psi^0(u|\xi|)\right)\right|\lesssim \frac{1}{(1+|\xi|)^3}
\end{align}
for all $(\alpha,\beta)\in \mathbb{N}_0\times \mathbb{N}^2_0$ with $|\alpha|\leq 1$ and $|\beta|\leq 3$, where the implicit constant is independent of $u\in[1,2]$. From \eqref{eq:2.3}, it is not difficult to see that
\begin{eqnarray}\label{eq:2.4}
\left\{\aligned
\left(H(u, \cdot)\Psi^0(u|\cdot|)\right)^{\vee}&\in L^{r}(\mathbb{R}^{2});\\
\left[\partial _u\left( H(u, \cdot)\Psi^0(u|\cdot|)\right)\right]^{\vee}& \in L^{r}(\mathbb{R}^{2}).
\endaligned\right.
\end{eqnarray}
for all $1\leq r \leq \infty$.

We now turn to bound the $L^q(\mathbb{R}^2)$ norm of $\sup_{u\in [1,2]} |\widetilde{M}^0_{u,\Gamma_j}f|$. By Lemma \ref{lemma 2.2} and $\textrm{H}\ddot{\textrm{o}}\textrm{lder}$'s inequality, it is easy to verify that
\begin{align}\label{eq:2.99}
&\left\|\sup_{u\in [1,2]}\left|\widetilde{M}^0_{u,\Gamma_j}f\right|\right\|^q_{L^{q}(\mathbb{R}^{2})} =\int_{\mathbb{R}^2} \sup_{u\in [1,2]}\left|\widetilde{M}^0_{u,\Gamma_j}f\right|^q  \,\textrm{d}x\\
&\quad \quad\leq \int_{\mathbb{R}^2} \left|\widetilde{M}^0_{1,\Gamma_j}f\right|^q  \,\textrm{d}x+q \int_{\mathbb{R}^2} \left(\int_1^2 \left|\widetilde{M}^0_{u,\Gamma_j}f\right|^q\,\textrm{d}u\right)^{\frac{1}{q'}}\left(\int_1^2 \left|\partial _u \left(\widetilde{M}^0_{u,\Gamma_j}f\right)\right|^q\,\textrm{d}u\right)^{\frac{1}{q}} \,\textrm{d}x  \nonumber \\
& \quad\quad\leq\int_{\mathbb{R}^2} \left|\widetilde{M}^0_{1,\Gamma_j}f\right|^q  \,\textrm{d}x+q \left(\int_1^2\int_{\mathbb{R}^2}  \left|\widetilde{M}^0_{u,\Gamma_j}f\right|^q \,\textrm{d}x\,\textrm{d}u\right)^{\frac{1}{q'}}\left(\int_1^2\int_{\mathbb{R}^2}  \left|\partial _u \left(\widetilde{M}^0_{u,\Gamma_j}f\right)\right|^q \,\textrm{d}x\,\textrm{d}u\right)^{\frac{1}{q}}.\nonumber
\end{align}
Moreover, by Young's inequality, the estimate \eqref{eq:2.99} together with \eqref{eq:2.4} gives
\begin{align}\label{eq:2.6}
\left\|\sup_{u\in [1,2]}\left|\widetilde{M}^0_{u,\Gamma_j}f\right|\right\|_{L^{q}(\mathbb{R}^{2})} \lesssim \|f\|_{L^{p}(\mathbb{R}^{2})}
\end{align}
for all $q\geq p \geq 1$.

Notice that $(\frac{1}{p},\frac{1}{q})$ satisfy $\frac{1}{q}\leq \frac{1}{p}$ and $1+(1 +\omega)(\frac{1}{q}-\frac{1}{p})>0$, by Lemma \ref{lemma 2.10}, we have $\sum_{j\leq 0} 2^j|2^j\gamma(2^j)|^{\frac{1}{q}-\frac{1}{p}} \lesssim 1$. This, combined with \eqref{eq:2.6}, which trivially leads to the estimate
\begin{align}\label{eq:2.7}
\sum_{j\leq 0} 2^j|2^j\gamma(2^j)|^{\frac{1}{q}-\frac{1}{p}} \left\|\sup_{u\in [1,2]} \left|\widetilde{M}^0_{u,\Gamma_j}\right| \right\|_{L^{p}(\mathbb{R}^{2})\rightarrow L^{q}(\mathbb{R}^{2})}\lesssim 1.
\end{align}
This is the desired estimate for the first part. We turn to the second part.

Consider $\sup_{u\in [1,2]} |\widetilde{M}_{u,\Gamma_j,k}f|$. Recall that
$$ \widetilde{M}_{u,\Gamma_j,k}f(x)=\int_{\mathbb{R}^{2}} \hat{f}(\xi) e^{ix\cdot\xi}H(u, \xi)\psi_k(u|\xi|)\,\textrm{d}\xi$$
and
$$H(u, \xi)=\int_{\mathbb{R}}  e^{-iu\xi_1 t-i u \xi_2 \Gamma_j(t) }\psi(t)\,\textrm{d}t.$$
Let the phase function in $H(u, \xi)$ be
$$\varphi(u,\xi,t):=-u\xi_1 t- u \xi_2 \Gamma_j(t).$$
Differentiate in $t$ to obtain
\begin{eqnarray}\label{eq:2.01}
\left\{\aligned
&\varphi'_t(u,\xi,t)=-u\xi_1 - u \xi_2 \Gamma'_j(t);\\
&\varphi''_t(u,\xi,t)=- u \xi_2 \Gamma''_j(t);\\
&\varphi'''_t(u,\xi,t)=- u \xi_2 \Gamma'''_j(t).
\endaligned\right.
\end{eqnarray}

By $(\textrm{ii})$ of Lemma \ref{lemma 2.1}, we obtain $ \frac{C^{(1)}_1}{2e^{C^{(1)}_{2}}} \leq|\Gamma_{j}'(t)|\leq 2e^{C^{(1)}_{2}}C^{(1)}_2$. If $|\xi_1|\geq 10 e^{C^{(1)}_{2}}C^{(1)}_2 |\xi_2|$, then clearly \begin{align}\label{eq:2.8}
|\varphi'_t(u,\xi,t)|\geq \frac{|\xi_1|}{2}-2|\xi_2||\Gamma'_j(t)|\geq |\xi_1|+|\xi_2|.
\end{align}
If $|\xi_2|\geq \frac{10e^{C^{(1)}_{2}}}{C^{(1)}_{1} }|\xi_1|$, we immediately conclude that
\begin{align}\label{eq:2.9}
|\varphi'_t(u,\xi,t)|\geq |\xi_2||\Gamma'_j(t)|-2|\xi_1| \geq |\xi_1|+|\xi_2|.
\end{align}
Let $\chi \in C^{\infty}_{c}(\mathbb{R}^{+})$ be a function such that $\chi =1$ on $[\frac{C^{(1)}_{1}}{10e^{C^{(1)}_{2}}}, 10 e^{C^{(1)}_{2}}C^{(1)}_2]$, integration by parts shows that
$$\left|\left(1-\chi\left(\frac{|\xi_1|}{|\xi_2|}\right)\right)H(u, \xi) \right|\lesssim \frac{1}{(1+|\xi|)^4}.$$
Notice that $|\xi|\approx 2^k$. Consequently,
\begin{align}\label{eq:2.10}
\left|\partial^{\alpha}_{\xi} \left[\left(1-\chi\left(\frac{|\xi_1|}{|\xi_2|}\right)\right)H(u, \xi)\psi_k(u|\xi|) \right]\right|\lesssim 2^{-k}\frac{1}{(1+|\xi|)^3}
\end{align}
and
\begin{align}\label{eq:2.1010}
\left|\partial_u\partial^{\alpha}_{\xi} \left[\left(1-\chi\left(\frac{|\xi_1|}{|\xi_2|}\right)\right)H(u, \xi)\psi_k(u|\xi|) \right]\right|\lesssim \frac{1}{(1+|\xi|)^3}
\end{align}
for all $|\alpha|\leq 3$, where the implicit constant is independent of $u\in[1,2]$.

Define
\begin{eqnarray*}
\left\{\aligned
&\widetilde{M}^1_{u,\Gamma_j,k}f(x):=\int_{\mathbb{R}^{2}} \hat{f}(\xi) e^{ix\cdot\xi}\chi\left(\frac{|\xi_1|}{|\xi_2|}\right)H(u, \xi)\psi_k(u|\xi|)\,\textrm{d}\xi;\\
&\widetilde{M}^2_{u,\Gamma_j,k}f(x):=\int_{\mathbb{R}^{2}} \hat{f}(\xi) e^{ix\cdot\xi}\left(1-\chi\left(\frac{|\xi_1|}{|\xi_2|}\right)\right)H(u, \xi)\psi_k(u|\xi|)\,\textrm{d}\xi.
\endaligned\right.
\end{eqnarray*}
As in \eqref{eq:2.3} and \eqref{eq:2.4}, by \eqref{eq:2.10} and \eqref{eq:2.1010}, we may conclude that
\begin{align}\label{eq:2.10101}
\left\|\widetilde{M}^2_{u,\Gamma_j,k}f\right\|_{L^{q}(\mathbb{R}^{2})} \lesssim 2^{-k} \|f\|_{L^{p}(\mathbb{R}^{2})} ~~~\textrm{and}~~~\left\|\partial_u\left(\widetilde{M}^2_{u,\Gamma_j,k}f\right)\right\|_{L^{q}(\mathbb{R}^{2})} \lesssim  \|f\|_{L^{p}(\mathbb{R}^{2})}
\end{align}
for all $q\geq p \geq 1$. Furthermore, we obtain from \eqref{eq:2.99} that
\begin{align*}
\left\|\sup_{u\in [1,2]}\left|\widetilde{M}^2_{u,\Gamma_j,k}f\right|\right\|_{L^{q}(\mathbb{R}^{2})} \lesssim 2^{-\frac{k}{q'}} \|f\|_{L^{p}(\mathbb{R}^{2})}
\end{align*}
holds for all $q\geq p \geq 1$. This along with Lemma \ref{lemma 2.10} shows
\begin{align}\label{eq:2.11}
\sum_{j\leq 0} 2^j|2^j\gamma(2^j)|^{\frac{1}{q}-\frac{1}{p}} \sum_{k\geq 1}\left\|\sup_{u\in [1,2]} \left|\widetilde{M}^2_{u,\Gamma_j,k} \right|\right\|_{L^{p}(\mathbb{R}^{2})\rightarrow L^{q}(\mathbb{R}^{2})}\lesssim \sum_{j\leq 0} 2^j|2^j\gamma(2^j)|^{\frac{1}{q}-\frac{1}{p}} \sum_{k\geq 1} 2^{-\frac{k}{q'}} \lesssim 1.
\end{align}
This is the desired estimate for $\sup_{u\in [1,2]} |\widetilde{M}^2_{u,\Gamma_j,k}|$. Thus, it remains to consider the other maximal function $\sup_{u\in [1,2]} |\widetilde{M}^1_{u,\Gamma_j,k}|$.

For $\widetilde{M}^1_{u,\Gamma_j,k}$, we consider two situations. If $\xi_1\xi_2\geq0$, it is easy to see that
\begin{align*}
|\varphi'_t(u,\xi,t)|\gtrsim |\xi_1|+|\xi_2|.
\end{align*}
Therefore, we also have
\begin{align*}
\left|\partial^{\alpha}_{\xi}\left(\chi\left(\frac{|\xi_1|}{|\xi_2|}\right)H(u, \xi)\psi_k(u|\xi|) \right)\right|\lesssim 2^{-k}\frac{1}{(1+|\xi|)^3}
\end{align*}
for all $|\alpha|\leq 3$ with the implicit constant is independent of $u\in[1,2]$. Consequently, as for $\sup_{u\in [1,2]} |\widetilde{M}^2_{u,\Gamma_j,k}|$, we may obtain the desired estimate \eqref{eq:2.11} for $\sup_{u\in [1,2]} |\widetilde{M}^1_{u,\Gamma_j,k}|$ in this case.

From now on, we will restrict our view on the most difficult situation in which $\xi_1$ and $\xi_2$ in $\widetilde{M}^1_{u,\Gamma_j,k}$ satisfying $\xi_1\xi_2<0$. Let
$$\varphi'_t(u,\xi,t_0)=-u\xi_1 - u \xi_2 \Gamma'_j(t_0)=0,$$
then $t_0$ is the critical point and $\Gamma'_j(t_0)=-\frac{\xi_1}{\xi_2}$. We remark that $t_0$ depends on $j$ and since our estimates about $t_0$ uniformly in the parameter $j$, we omit the parameter $j$ in notation $t_0$. Notice that $\Gamma'_j$ is strictly monotonic. This is because $\gamma'$ is strictly monotonic. Consequently, we can write $t_0=(\Gamma'_j)^{-1}(-\frac{\xi_1}{\xi_2})$. Furthermore, we can assume that $t_0\in (\frac{1}{2},2)$. Otherwise, one easily sees that $|\varphi'_t(u,\xi,t)|\gtrsim |\xi_1|+|\xi_2|$ holds, which will leads to the desired estimate as in the treatment of $\sup_{u\in [1,2]} |\widetilde{M}^2_{u,\Gamma_j,k}|$.

We shall consider a one-dimensional oscillatory integral involving phase function with a non-degenerate critical point. Based on an approach in the spirit of stationary phase estimates, we first rewrite
 $$H(u, \xi)=\int_{\mathbb{R}}  e^{i \varphi(u,\xi,t) }\psi(t)\,\textrm{d}t=\int_{\mathbb{R}}  e^{i\varphi(u,\xi,t+t_0) }\psi(t+t_0)\,\textrm{d}t.$$
Applying Taylor's theorem gives
$$\varphi(u,\xi,t+t_0)=\varphi(u,\xi,t_0)+\varphi'_t(u,\xi,t_0)+\frac{t^2}{2!}\varphi''_t(u,\xi,t_0)
+\frac{t^3}{2!}\int_0^1(1-\theta)^2\varphi'''_t(u,\xi,\theta t+t_0)\,\textrm{d}\theta.$$
Let us set $$\eta(t,t_0):=\frac{\Gamma''_j(t_0)}{2}+\frac{t}{2}\int_0^1(1-\theta)^2\Gamma'''_j(\theta t+t_0)\,\textrm{d}\theta,$$ which can be seemed as a compactly supported smooth function. From \eqref{eq:2.01} and the fact that $t_0$ is the critical point, we conclude that
\begin{align}\label{eq:2.13}
\varphi(u,\xi,t+t_0)=\varphi(u,\xi,t_0)-u\xi_2t^2\eta(t,t_0).
\end{align}

Therefore, we can write
\begin{align}\label{eq:2.014}
\widetilde{M}^1_{u,\Gamma_j,k}f(x)=\int_{\mathbb{R}^{2}} e^{ix\cdot\xi}e^{i \varphi(u,\xi,t_0)}m_{j,k}(u,\xi) \hat{f}(\xi)\,\textrm{d}\xi,
\end{align}
where $$m_{j,k}(u,\xi):=\chi\left(\frac{|\xi_1|}{|\xi_2|}\right)\psi_k(u|\xi|)  \int_{\mathbb{R}}  e^{-iu\xi_2t^2\eta(t,t_0) }\psi(t+t_0)\,\textrm{d}t.$$
In order to estimate $\sup_{u\in [1,2]}| \widetilde{M}^1_{u,\Gamma_j,k}|$ in the case that $\xi_1\xi_2<0$, we first observe that $\widetilde{M}^1_{u,\Gamma_j,k}$ can be localized. Indeed, we can rewrite $\widetilde{M}^1_{u,\Gamma_j,k}$ as
\begin{align*}
\int_{\mathbb{R}^{2}}\int_{\mathbb{R}^{2}} e^{i(x-y)\cdot\xi}e^{i \varphi(u,\xi,t_0)}m_{j,k}(u,\xi) \,\textrm{d}\xi f(y) \,\textrm{d}y.
\end{align*}
Notice that $\Gamma'_j(t_0)=-\frac{\xi_1}{\xi_2}$, $t_0\approx 1$ and $\Gamma_j(t_0)\approx 1$, then there exists a positive constant $\varpi$ large enough such that
\begin{align*}
\left|\nabla_{\xi}\left[(x-y)\cdot\xi+\varphi(u,\xi,t_0)\right]\right|\gtrsim |x-y|
\end{align*}
if $|x-y|\geq \varpi$. Therefore, via an integration by parts, we can bound the kernel of $\widetilde{M}^1_{u,\Gamma_j,k}$, i.e.,
\begin{align*}
K_{j,k}(x,u,y):=\int_{\mathbb{R}^{2}} e^{i(x-y)\cdot\xi}e^{i \varphi(u,\xi,t_0)}m_{j,k}(u,\xi) \,\textrm{d}\xi,
\end{align*}
by $2^{(-\frac{1}{2}-N)k}\frac{1}{|x-y|^N}$ for some $N\in \mathbb{N}$ large enough if  $|x-y|\geq \varpi$. Furthermore, we obtain that
\begin{align}\label{eq:2.101}
\left\|\widetilde{M}^{1,a}_{u,\Gamma_j,k}f\right\|_{L^{q}(\mathbb{R}^{2})} \lesssim 2^{(-\frac{1}{2}-N)k} \|f\|_{L^{p}(\mathbb{R}^{2})}
\end{align}
for all $q\geq p \geq 1$, where
$$\widetilde{M}^{1,a}_{u,\Gamma_j,k}f(x):=\int_{\mathbb{R}^{2}} \chi_{B^{\complement}(x,\varpi)} (y)K_{j,k}(x,u,y) f(y) \,\textrm{d}y.$$
The proof of the $L^p(\mathbb{R}^2)\rightarrow L^q(\mathbb{R}^2)$ boundedness of $\partial_u(\widetilde{M}^{1,a}_{u,\Gamma_j,k}f)$ is similar as that of $\widetilde{M}^{1,a}_{u,\Gamma_j,k}f$ with some slight modifications. First write
\begin{align*}
\partial_u\left( e^{i \varphi(u,\xi,t_0)}m_{j,k}(u,\xi)\right) =& e^{i \varphi(u,\xi,t_0)}(-\xi_1t_0-u\xi_2\Gamma_j(t_0) )m_{j,k}(u,\xi)\\
&+ e^{i \varphi(u,\xi,t_0)}\chi\left(\frac{|\xi_1|}{|\xi_2|}\right)\psi'_k(u|\xi|) 2^{-k}|\xi| \int_{\mathbb{R}}  e^{-iu\xi_2t^2\eta(t,t_0) }\psi(t+t_0)\,\textrm{d}t\\
&+ e^{i \varphi(u,\xi,t_0)}\chi\left(\frac{|\xi_1|}{|\xi_2|}\right)\psi_k(u|\xi|) \int_{\mathbb{R}}  e^{-iu\xi_2t^2\eta(t,t_0) }(-i\xi_2t^2\eta(t,t_0) )\psi(t+t_0)\,\textrm{d}t
\end{align*}
and notice that $|\xi_1|\approx |\xi_2|\approx 2^k$, $u\approx1$, $t_0\approx 1$ and $\Gamma_j(t_0)\approx 1$, it is easy to see that the properties of $\partial_u( e^{i \varphi(u,\xi,t_0)}m_{j,k}(u,\xi))$ just like $2^k e^{i \varphi(u,\xi,t_0)}m_{j,k}(u,\xi)$. Therefore, we may obtain
\begin{align}\label{eq:2.102}
\left\|\partial_u\left(\widetilde{M}^{1,a}_{u,\Gamma_j,k}f\right)\right\|_{L^{q}(\mathbb{R}^{2})} \lesssim 2^k2^{(-\frac{1}{2}-N)k} \|f\|_{L^{p}(\mathbb{R}^{2})}
\end{align}
for all $q\geq p \geq 1$. From \eqref{eq:2.101}, the $L^p(\mathbb{R}^2)\rightarrow L^q(\mathbb{R}^2)$ boundedness of $\widetilde{M}^{1,a}_{1,\Gamma_j,k}$ with a upper bound $2^{(-\frac{1}{2}-N)k}$ can also been obtained. This is because our estimate \eqref{eq:2.101} is independent of $u\in[1,2]$. Consequently, as in the treatment of \eqref{eq:2.6}, we can establish the $L^{p}(\mathbb{R}^{2})\rightarrow L^{q}(\mathbb{R}^{2})$ boundedness of $\sup_{u\in [1,2]}| \widetilde{M}^{1,a}_{u,\Gamma_j,k}|$ with a upper bound $2^{(-\frac{1}{2}-N)k }+ 2^{(-\frac{1}{2}-N)k }2^{\frac{k}{q}}$. Furthermore, combining Lemma \ref{lemma 2.10} and the fact that $N\in \mathbb{N}$ large enough, it is easy to obtain the desired estimate \eqref{eq:2.11} for $\sup_{u\in [1,2]}| \widetilde{M}^{1,a}_{u,\Gamma_j,k}|$.

It remains to estimate the operator
$$\widetilde{M}^{1,b}_{u,\Gamma_j,k}f(x):=\int_{\mathbb{R}^{2}} \chi_{B(x,\varpi)} (y)K_{j,k}(x,u,y) f(y) \,\textrm{d}y.$$
For this purpose, we begin with some definitions. Let $\Omega:\ \mathbb{R}^2\times\mathbb{R}\rightarrow\mathbb{R}$ be a nonnegative smooth function, identically equal to one on $\{(x,u)\in \mathbb{R}^2\times\mathbb{R}:\ |x|\leq \varpi, u\in [1,2]\}$ and vanishing outside $\{(x,u)\in \mathbb{R}^2\times\mathbb{R}:\  |x|\leq 2\varpi, u\in(\frac{1}{2}, \frac{5}{2})\}$, and
$$\Psi(x,u,\xi):=x\cdot\xi+\varphi(u,\xi,t_0),~~~ a_{j,k}(x,u,\xi):=\Omega(x,u)m_{j,k}(u,\xi).$$ We define
\begin{align}\label{eq:2.22}
A_{j,k}f(x,u):=\int_{\mathbb{R}^{2}} e^{i\Psi(x,u,\xi)}a_{j,k}(x,u,\xi) \hat{f}(\xi)\,\textrm{d}\xi,
\end{align}
which is a localized version of $\widetilde{M}^1_{u,\Gamma_j,k}$. The operator $A_{j,k}$ is related to a class of Fourier integral operators studied in many papers (see, for instance, \cite{MSS, B, Lee}), and the Fourier integral operators are originated from the study of pseudo-differential operators or half-wave propagator.

We show that it suffices to obtain the following estimate: there exists a constant $\mu\in (\frac{1}{q}, \infty)$ such that
\begin{align}\label{eq:2.103}
\left\|A_{j,k}f\right\|_{L^{q}(\mathbb{R}^{3})} \lesssim 2^{- \mu k} \|f\|_{L^{p}(\mathbb{R}^{2})}
\end{align}
for all $(\frac{1}{p},\frac{1}{q})\in \Delta$, where the implicit constant is independent of $j$ and $k$. The proof of \eqref{eq:2.103} is the key to this paper, the constant $\mu\in (\frac{1}{q}, \infty)$ plays an important role in the following \eqref{eq:2.h}, which ensures that the series $\sum_{k\geq 1} 2^{-\mu k }2^{\frac{k}{q}} $ converges and further, by Lemma \ref{lemma 2.10}, implies the desired estimate \eqref{eq:2.11} for $\sup_{u\in [1,2]} |\widetilde{M}^{1,b}_{u,\Gamma_j,k}|$.

Indeed, we decompose $\mathbb{R}^{2}$ as $\bigcup_{i\in \mathbb{Z}^{2}}B(x_i,\varpi)$ such that for any $i\neq i'$, $|x_i-x_{i'}|\approx |i-i'|\varpi$. Then, it is easy to see that\footnote{We note $\|\widetilde{M}^{1,b}_{u,\Gamma_j,k}f\|_{L^{q}(\mathbb{R}^{2}\times[1,2])}:= [\int_1^2\int_{\mathbb{R}^{2}} |\widetilde{M}^{1,b}_{u,\Gamma_j,k}f(x)|^q \,\textrm{d}x \,\textrm{d}u]^{\frac{1}{q}}$.}
\begin{align*}
\left\|\widetilde{M}^{1,b}_{u,\Gamma_j,k}f\right\|^q_{L^{q}(\mathbb{R}^{2}\times[1,2])}\lesssim\sum_{i\in \mathbb{Z}^{2}}\left\|\int_{\mathbb{R}^{2}} \chi_{B(x,\varpi)} (y)K_{j,k}(x,u,y) f(y) \,\textrm{d}y\right\|^q_{L^{q}(B(x_i,\varpi)\times [1,2] )} .
\end{align*}
By changing of variables, together with the fact that $K_{j,k}(x-x_i,u,y-x_i)=K_{j,k}(x,u,y)$, we can write the last display as
\begin{align*}
\sum_{i\in \mathbb{Z}^{2}}\left\|\int_{\mathbb{R}^{2}} \chi_{B(x,\varpi)} (y)K_{j,k}(x,u,y) f(y-x_i) \,\textrm{d}y\right\|^q_{L^{q}(B(0,\varpi)\times [1,2] )}.
\end{align*}
Let
\begin{align*}
B_{j,k}f(x,u):=\Omega(x,u)\int_{\mathbb{R}^{2}} \chi_{B(x,\varpi)} (y)K_{j,k}(x,u,y) f(y) \,\textrm{d}y,
\end{align*}
we then have
\begin{align}\label{eq:2.a}
\left\|\widetilde{M}^{1,b}_{u,\Gamma_j,k}f\right\|^q_{L^{q}(\mathbb{R}^{2}\times[1,2])}
&\lesssim\sum_{i\in \mathbb{Z}^{2}}\left\|B_{j,k}(f(\cdot-x_i))\right\|^q_{L^{q}(\mathbb{R}^{3} )}\\
&\lesssim \sum_{i\in \mathbb{Z}^{2}}\left\|(B_{j,k}-A_{j,k})(f(\cdot-x_i))\right\|^q_{L^{q}(\mathbb{R}^{3} )}+\sum_{i\in \mathbb{Z}^{2}}\left\|A_{j,k}(f(\cdot-x_i))\right\|^q_{L^{q}(\mathbb{R}^{3} )}.\nonumber
\end{align}
We remark that the left hand side of \eqref{eq:2.a} is a $L^q$ norm on $\mathbb{R}^{2}\times [1,2]$ and previous estimates (see, for example, \eqref{eq:2.10101}, \eqref{eq:2.101} and \eqref{eq:2.102}) are all a $L^q$ norm on $\mathbb{R}^{2}$. This is because we need a local smoothing estimate to obtain \eqref{eq:2.103}, and the integral over $[1,2]$ about $u$ plays an important role in this process.

The operator $B_{j,k}-A_{j,k}$ can be handled in a way similar to $\widetilde{M}^{1,a}_{u,\Gamma_j,k}$ since the kernel of $B_{j,k}-A_{j,k}$ is supported on $\{(x,y)\in\mathbb{R}^{2}:\ |x-y|\geq \varpi\}$. Therefore, as in the treatment of \eqref{eq:2.101}, it is not difficult to obtain
\begin{align}\label{eq:2.b}
\sum_{i\in \mathbb{Z}^{2}}\left\|(B_{j,k}-A_{j,k})(f(\cdot-x_i))\right\|^q_{L^{q}(\mathbb{R}^{3} )}\lesssim & \sum_{i\in \mathbb{Z}^{2}}2^{(-\frac{1}{2}-N)k q} \|f(\cdot-x_i)  \chi_{B(0,2\varpi)} (\cdot)\|^q_{L^{p}(\mathbb{R}^{2})}\\
\lesssim &2^{(-\frac{1}{2}-N)k q} \|f\|^q_{L^{p}(\mathbb{R}^{2})}\nonumber
\end{align}
for some $N\in \mathbb{N}$ large enough. As in \eqref{eq:2.102}, we also have
\begin{align}\label{eq:2.c}
\sum_{i\in \mathbb{Z}^{2}}\left\|\partial_u\left((B_{j,k}-A_{j,k})(f(\cdot-x_i))\right)\right\|^q_{L^{q}(\mathbb{R}^{3})}\lesssim 2^{kq}2^{(-\frac{1}{2}-N)k q} \|f\|^q_{L^{p}(\mathbb{R}^{2})}.
\end{align}
On the other hand, it follows from \eqref{eq:2.103} that
\begin{align}\label{eq:2.d}
\sum_{i\in \mathbb{Z}^{2}}\left\|A_{j,k}(f(\cdot-x_i))\right\|^q_{L^{q}(\mathbb{R}^{3} )}\lesssim \sum_{i\in \mathbb{Z}^{2}}2^{-\mu k q}\|f(\cdot-x_i)  \chi_{B(0,2\varpi)} (\cdot)\|^q_{L^{p}(\mathbb{R}^{2})}\lesssim 2^{-\mu k q} \|f\|^q_{L^{p}(\mathbb{R}^{2})}.
\end{align}
From \eqref{eq:2.103}, as in the treatment of \eqref{eq:2.102}, one may get
\begin{align}\label{eq:2.e}
\sum_{i\in \mathbb{Z}^{2}}\left\|\partial_u\left(A_{j,k}(f(\cdot-x_i))\right)\right\|^q_{L^{q}(\mathbb{R}^{3} )}\lesssim 2^{kq} 2^{-\mu k q} \|f\|^q_{L^{p}(\mathbb{R}^{2})}.
\end{align}

Based on these estimates we are invited to bound $\sup_{u\in [1,2]} |\widetilde{M}^{1,b}_{u,\Gamma_j,k}|$. From \eqref{eq:2.a}, combining \eqref{eq:2.b} and \eqref{eq:2.d}, we establish that
\begin{align}\label{eq:2.f}
\left\|\widetilde{M}^{1,b}_{u,\Gamma_j,k}f\right\|_{L^{q}(\mathbb{R}^{2}\times[1,2])}\lesssim \left(2^{(-\frac{1}{2}-N)k }+ 2^{-\mu k } \right)\|f\|_{L^{p}(\mathbb{R}^{2})}.
\end{align}
As in \eqref{eq:2.a}, combining \eqref{eq:2.c} and \eqref{eq:2.e}, we have that
\begin{align}\label{eq:2.g}
\left\|\partial_u\left(\widetilde{M}^{1,b}_{u,\Gamma_j,k}f\right)\right\|_{L^{q}(\mathbb{R}^{2}\times[1,2])}\lesssim 2^k\left(2^{(-\frac{1}{2}-N)k }+ 2^{-\mu k } \right)\|f\|_{L^{p}(\mathbb{R}^{2})}.
\end{align}
It is easy to obtain the $L^p(\mathbb{R}^2)\rightarrow L^q(\mathbb{R}^2)$ boundedness of $\widetilde{M}^{1,b}_{1,\Gamma_j,k}$ with a upper bound $2^{(-\frac{1}{2}-N)k }+ 2^{-\mu k } $. Therefore, as in the treatment of \eqref{eq:2.99}, by the estimates \eqref{eq:2.f} and \eqref{eq:2.g}, we conclude that
\begin{align}\label{eq:2.h}
\left\|\sup_{u\in [1,2]} \left|\widetilde{M}^{1,b}_{u,\Gamma_j,k}f\right|\right\|_{L^{q}(\mathbb{R}^{2})}\lesssim \left[2^{(-\frac{1}{2}-N)k }+ 2^{-\mu k }+ \left(2^{(-\frac{1}{2}-N)k }+ 2^{-\mu k }\right)2^{\frac{k}{q}}\right] \|f\|_{L^{p}(\mathbb{R}^{2})}
\end{align}
for all $(\frac{1}{p},\frac{1}{q})\in \Delta$, where the implicit constant is independent of $j$ and $k$. This, combined with Lemma \ref{lemma 2.10} and the facts that $\mu\in (\frac{1}{q}, \infty)$ and $N\in \mathbb{N}$ large enough, leads to the desired estimate \eqref{eq:2.11} for $\sup_{u\in [1,2]} |\widetilde{M}^{1,b}_{u,\Gamma_j,k}|$.

Thus, we reduce the problem to proving \eqref{eq:2.103}, we will use the well-known local smoothing estimates. Our estimate \eqref{eq:2.103} is based on the following two lemmas obtained in \cite{B} and \cite{Lee}, respectively. Beltran, Hickman and Sogge \cite{B} obtained $L^p(\mathbb{R}^n)\rightarrow L^p(\mathbb{R}^{n+1})$ boundedness for a class of Fourier integral operators satisfying the curvature condition in \cite{B} and it is equivalent to the cinematic curvature condition defined in \cite{Sog}.

\begin{lemma}\label{lemma 2.5}
(\cite[Proposition 3.2]{B}) Let
$$T_{\nu}f(z):=\int_{\mathbb{R}^{n}} e^{i\phi(z,\xi)}a(z,\xi) \frac{\hat{f}(\xi)}{(1+|\xi|^2)^{\frac{\nu}{2}}}\,\textrm{d}\xi,$$
where $a(z,\xi)$ is a symbol of order zero. Here and below $z$ is used to denote vector in $\mathbb{R}^{n}\times \mathbb{R}$ comprised of the space-time variables $(x,u)$. Suppose $\textrm{supp}~a(\cdot,\xi)$ is contained in a fixed compact set and suppose that $\phi(z,\cdot)$ is a homogeneous function of degree one. For all $(z,\xi)\in \textrm{supp}~a$, $\phi$ satisfies:
\begin{enumerate}
  \item[\rm(i)]  $\textrm{rank}~\partial^2_{z\xi}\phi=n$;
  \item[\rm(ii)] $\textrm{rank}~\partial^2_{\xi\xi}\langle\partial_z\phi,\theta\rangle=n-1$ provided $\theta\in \mathbb{S}^n$ is the direction (unique up to sign) for which $\nabla_\xi\langle\partial_z\phi,\theta\rangle=0$.
\end{enumerate}
Then for $\frac{2(n+1)}{n-1}\leq p<\infty$,
$$\left\|T_{\nu}f\right\|_{L^{p}(\mathbb{R}^{n+1})} \lesssim \|f\|_{L^{p}(\mathbb{R}^{n})}$$
provided $\nu> (n-1)|\frac{1}{2}-\frac{1}{p}|-\frac{1}{p}$.
\end{lemma}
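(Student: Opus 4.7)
My plan is to follow the approach of Beltran-Hickman-Sogge: use the two rank hypotheses to put the phase into a variable-coefficient half-wave normal form, and then invoke the sharp $\ell^p$-decoupling inequality of Bourgain-Demeter for the cone. I start with a dyadic decomposition $T_\nu f = \sum_{k\geq 0} T_{\nu,k}f$, where the symbol of $T_{\nu,k}$ is further localised by $\psi(2^{-k}|\xi|)$. On the support of the $k$-th piece the weight $(1+|\xi|^2)^{-\nu/2}$ is comparable to $2^{-\nu k}$, so the lemma reduces to proving the fixed-frequency bound
\begin{align*}
\|T_{0,k}f\|_{L^p(\mathbb{R}^{n+1})} \lesssim 2^{k[(n-1)(1/2-1/p)-1/p]+\epsilon k}\|f\|_{L^p(\mathbb{R}^n)}
\end{align*}
for each $\epsilon>0$ and $p\geq 2(n+1)/(n-1)$; the geometric series in $k$ then converges exactly under the hypothesis $\nu > (n-1)|\tfrac12-\tfrac1p|-\tfrac1p$.

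To obtain the fixed-frequency estimate I would first use hypothesis (i), $\mathrm{rank}\,\partial^2_{z\xi}\phi = n$, together with a partition of unity in a conic neighbourhood and a local change of the space-time variables $z=(x,u)$, to put the phase into the normal form $\phi(z,\xi) = x\cdot\xi + u\,q(x,\xi)$ with $q$ homogeneous of degree one in $\xi$. Hypothesis (ii) then says that $\partial^2_{\xi\xi}q$ attains its maximal rank $n-1$ at each critical direction, so that the level set $\{q(x,\cdot)=1\}$ is a smooth hypersurface with everywhere non-vanishing Gaussian curvature, uniformly in $x$. The characteristic variety of $T_{0,k}$ is thus a variable-coefficient truncated light cone in $\mathbb{R}^{n+1}$, and I would decompose the annulus $|\xi|\sim 2^k$ into caps $\theta$ of angular aperture $2^{-k/2}$ adapted to this cone, writing $T_{0,k}f = \sum_\theta T_{0,k}^\theta f$.

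The main step is then to apply the Bourgain-Demeter $\ell^2$-decoupling inequality for the cone, which at the exponent $p\geq 2(n+1)/(n-1)$ yields
\begin{align*}
\|T_{0,k}f\|_{L^p(\mathbb{R}^{n+1})} \lesssim 2^{\epsilon k}\Bigl(\sum_\theta \|T_{0,k}^\theta f\|_{L^p(\mathbb{R}^{n+1})}^2\Bigr)^{1/2}.
\end{align*}
A single cap piece $T_{0,k}^\theta$ is essentially a half-wave propagator concentrated on a plate of dimensions $2^{-k}\times 2^{-k/2}\times 1$ in the frequency side; a standard stationary phase and Bernstein argument gives $\|T_{0,k}^\theta f\|_{L^p(\mathbb{R}^{n+1})} \lesssim 2^{-k/p}\|P_\theta f\|_{L^p(\mathbb{R}^n)}$ where $P_\theta$ is the Fourier cut-off to $\theta$. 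Reassembling these estimates via Littlewood-Paley and the almost orthogonality of the caps produces the required exponent $(n-1)(\tfrac12-\tfrac1p)-\tfrac1p$.

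The hard part will be the decoupling step itself: because $q$ depends on $x$, one cannot quote the translation-invariant Bourgain-Demeter cone decoupling verbatim. The standard fix is to partition space-time into balls of radius $\sim 2^{k/2}$ on which the cone is essentially frozen, apply decoupling to the frozen cone there, and then sum the contributions via almost orthogonality while absorbing the $2^{\epsilon k}$ loss into the final exponent. Verifying that the frozen cone genuinely satisfies the Bourgain-Demeter hypothesis uniformly in $x$, and that the patching produces only a $2^{\epsilon k}$ loss, is the technical core of the argument and the only place where the full strength of hypotheses (i) and (ii) is used.
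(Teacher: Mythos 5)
This lemma is not proved in the paper at all: it is quoted verbatim as Proposition 3.2 of Beltran--Hickman--Sogge \cite{B} and used as a black box, so there is no internal argument to compare yours against. What you have written is a compressed outline of the proof in \cite{B} itself (dyadic frequency localisation, reduction to a normal form via the two rank conditions, cone decoupling, cap estimates). For the purposes of this paper that is the wrong level at which to work --- the citation is the intended ``proof'' --- but taken on its own terms your outline has two genuine gaps.

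First, the decoupling inequality you display, with only a $2^{\epsilon k}$ loss in the $\ell^2$ form for \emph{all} $p\geq \frac{2(n+1)}{n-1}$, is false above the critical exponent: $\ell^2$ decoupling for the cone in $\mathbb{R}^{n+1}$ holds with $\epsilon$-loss only for $2\leq p\leq \frac{2(n+1)}{n-1}$, and in the supercritical range one must use the $\ell^p$ form carrying the cap-counting factor $(\#\theta)^{1/2-1/p}$ (or interpolate with a trivial bound); the final exponent $(n-1)(\tfrac12-\tfrac1p)-\tfrac1p$ still closes, but not via the inequality you wrote. Second, and more seriously, the variable-coefficient step cannot be dispatched by a single freezing of the cone on balls of radius $2^{k/2}$ followed by ``patching with almost orthogonality'': the error between the variable phase and its frozen constant-coefficient approximant at that scale is not negligible, and summing the frozen pieces naively loses a power of $2^k$ rather than $2^{\epsilon k}$. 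The actual argument of \cite{B} runs an induction over $O(\log k)$ intermediate spatial scales (their variable-coefficient Wolff-type inequalities) precisely to control this. Labelling that step ``the technical core'' and deferring it means the proposal defers essentially the entire content of the theorem being proved.
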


On the other hand, Lee \cite{Lee} used the bilinear method to establish $L^p(\mathbb{R}^n)\rightarrow L^q(\mathbb{R}^{n+1})$ estimates for a class of Fourier integral operators satisfying the so called cinematic curvature condition and an additional condition, which gives an improvement of the $L^2(\mathbb{R}^n)\rightarrow L^q(\mathbb{R}^{n+1})$ boundedness in \cite{MSS}.

\begin{lemma}\label{lemma 2.4}
(\cite[Corollary 1.5]{Lee}) Let $T_{\nu}$ be defined as above. For all $(z,\xi)\in \textrm{supp}~a$, $\phi$ satisfies:
\begin{enumerate}
  \item[\rm(i)]  $\textrm{rank}~\partial^2_{z\xi}\phi=n$;
  \item[\rm(ii)] $\textrm{rank}~\partial^2_{\xi\xi}\langle\partial_z\phi,\theta\rangle=n-1$ provided $\theta\in \mathbb{S}^n$ is the direction (unique up to sign) for which $\nabla_\xi\langle\partial_z\phi,\theta\rangle=0$;
  \item[\rm(iii)] also all nonzero eigenvalues of $\partial^2_{\xi\xi}\langle\partial_z\phi,\theta\rangle$ have the same sign.
\end{enumerate}
Then for $\frac{2(n^2+2n-1)}{n^2-1}\leq q \leq \infty$, $\frac{n+1}{q}\leq (n-1)(1-\frac{1}{p})$ and $q\geq \frac{n+3}{n+1}p$,
$$\left\|T_{\nu}f\right\|_{L^{q}(\mathbb{R}^{n+1})} \lesssim \|f\|_{L^{p}(\mathbb{R}^{n})}$$
provided $\nu> \frac{1}{p}-\frac{n+1}{q}+\frac{n-1}{2}$.
\end{lemma}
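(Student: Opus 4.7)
The plan is to adapt the bilinear restriction method of Tao--Vargas--Vega and Wolff to the variable-coefficient setting, in the spirit of the approach Lee introduced for Fourier integral operators. First I would perform a Littlewood--Paley decomposition $T_\nu = \sum_{\lambda \text{ dyadic}} T_\nu^\lambda$ and pull the factor $\lambda^{-\nu}$ out of the symbol, reducing matters to proving the frequency-localized estimate $\|T^\lambda f\|_{L^q(\mathbb{R}^{n+1})} \lesssim \lambda^{\frac{1}{p}-\frac{n+1}{q}+\frac{n-1}{2}+\varepsilon}\|f\|_{L^p(\mathbb{R}^n)}$ for every $\varepsilon>0$, and then summing the resulting geometric series in $\lambda$.

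Next I would decompose the dyadic annulus $|\xi|\sim\lambda$ into angular caps $\tau$ of diameter $\sim\lambda^{-1/2}$ and write $T^\lambda_\tau f(z) := \int_{\mathbb{R}^n} e^{i\phi(z,\xi)} a(z,\xi) \chi_\tau(\xi) \hat f(\xi)\,\mathrm{d}\xi$. The heart of the proof is the bilinear estimate: whenever $\tau_1,\tau_2$ are angularly $2^{-j}$-separated, one should establish $\bigl\|T^\lambda_{\tau_1}f_1 \cdot T^\lambda_{\tau_2}f_2\bigr\|_{L^{q/2}(\mathbb{R}^{n+1})} \lesssim 2^{-j\alpha}\lambda^{\beta}\|f_1\|_{L^p}\|f_2\|_{L^p}$ with exponents matching the statement. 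Hypothesis (i) gives nondegeneracy of the canonical relation, so that a wave-packet decomposition at scale $\lambda^{1/2}\times\cdots\times\lambda^{1/2}\times\lambda$ is meaningful. Hypothesis (ii) is the cone-type (cinematic) curvature that ensures the characteristic variety is a smooth hypersurface with full-rank tangential curvature. The definite-sign hypothesis (iii) makes this hypersurface locally convex, and together with (ii) plays the role of the ellipticity needed to invoke Tao's elliptic bilinear restriction theorem rather than only the weaker cone restriction.

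To establish the bilinear estimate I would run a Wolff--Tao induction on scales inside the variable-coefficient framework: expand each $T^\lambda_{\tau_i}f_i$ into wave packets adapted to $\tau_i$, exploit the transversality between their tube directions (a consequence of (ii) together with (iii)) to extract an $L^2$ orthogonality gain, and close the induction with loss at most $\lambda^\varepsilon$. Passing from the bilinear estimate back to the linear one is then a standard Whitney-type summation over pairs of angularly comparable caps off the diagonal, interpolated against the trivial $L^1\to L^\infty$ bound and the sharp $L^2\to L^2$ decay obtained via $TT^*$ and stationary phase. The quantitative hypotheses $q\geq 2(n^2+2n-1)/(n^2-1)$, $(n+1)/q\leq(n-1)(1-1/p)$, and $q\geq (n+3)p/(n+1)$ emerge precisely from balancing the bilinear exponent against these endpoint inputs.

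The main obstacle is the bilinear restriction estimate itself. Because $\phi(z,\xi)$ varies with $z$, the wave packets are not translation-invariant and the induction on scales must be executed with error terms controlled by the $C^N$-norm of $\phi$, which makes the bookkeeping considerably more delicate than in the flat extension setting. Crucially, condition (iii) is what separates Lemma \ref{lemma 2.4} from the weaker Lemma \ref{lemma 2.5}: without a definite sign one can only access a cone-type bilinear inequality, producing the narrower range of Lemma \ref{lemma 2.5}, whereas (iii) unlocks the improved elliptic bilinear range appearing here. As the authors emphasize, verifying that conditions (i)--(iii) actually hold for the phase $\Psi(x,u,\xi) = x\cdot\xi + \varphi(u,\xi,t_0)$ arising from the curve $\gamma$ is itself a nontrivial computation to be carried out later in the paper.
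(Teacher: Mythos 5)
The paper does not prove this lemma at all: it is quoted verbatim as Corollary 1.5 of \cite{Lee} and used as a black box, so there is no in-paper argument to compare yours against. Judged on its own terms, your outline correctly identifies the architecture of Lee's actual proof --- Littlewood--Paley localization to $|\xi|\sim\lambda$, angular decomposition into caps of width $\lambda^{-1/2}$, a bilinear estimate for angularly separated pieces proved by wave packets and induction on scales, a Whitney summation to recover the linear bound, and interpolation with the trivial and $L^2$ estimates --- and you correctly isolate hypothesis (iii) as the ellipticity that upgrades the conical bilinear range of Lemma \ref{lemma 2.5} to the elliptic one used here.

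As a proof, however, the proposal has a genuine gap at its center: the variable-coefficient bilinear restriction estimate is the entire content of Lee's paper and is only asserted (``one should establish\ldots''), with the exponents $\alpha,\beta$ left unspecified. Without actually constructing the non-translation-invariant wave packets, controlling the error terms produced by the $z$-dependence of $\phi$, and closing the induction on scales, nothing has been established. Likewise, the claim that the hypotheses $q\geq 2(n^2+2n-1)/(n^2-1)$, $(n+1)/q\leq(n-1)(1-1/p)$ and $q\geq\frac{n+3}{n+1}p$ ``emerge precisely from balancing'' is not verified; extracting this numerology from the bilinear exponent is a concrete computation that must be carried out. Since this is an external result, the appropriate move is exactly what the authors do --- cite \cite{Lee} --- rather than attempt to reprove it in outline.
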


The rest part of this section is devoted to a proof of \eqref{eq:2.103} based on Lemmas \ref{lemma 2.5} and \ref{lemma 2.4} in our case $n=2$. We first give the following lemma.

\begin{lemma}\label{lemma 2.3} Recall that $\eta(t,t_0)=\frac{\Gamma''_j(t_0)}{2}+\frac{t}{2}\int_0^1(1-\theta)^2\Gamma'''_j(\theta t+t_0)\,\textrm{d}\theta$. Let us set
\begin{align*}
h(\lambda,t_0):=\int_{\mathbb{R}}  e^{-i\lambda t^2\eta(t,t_0) }\psi(t+t_0)\,\textrm{d}t.
\end{align*}
Then, for any $|\lambda|\geq 1$ and $t_0\in(\frac{1}{2},2)$, one has
\begin{align*}
\left|\partial_{\lambda}^{\alpha}\partial_{t_{0}}^{\beta}h(\lambda,t_{0})\right|\lesssim |\lambda|^{-\frac{1}{2}-\alpha}
\end{align*}
for all $\alpha,\beta\in \mathbb{N}$ and $\beta<N-2$, where the implicit constant is independent of $j$.
\end{lemma}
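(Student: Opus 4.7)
The plan is a careful stationary phase analysis. By Taylor's theorem with integral remainder,
$$t^{2}\eta(t,t_{0})=\Gamma_{j}(t+t_{0})-\Gamma_{j}(t_{0})-t\Gamma_{j}'(t_{0})=:\Theta(t,t_{0}),$$
so $h(\lambda,t_{0})=\int_{\mathbb{R}}e^{-i\lambda\Theta(t,t_{0})}\psi(t+t_{0})\,\textrm{d}t$. One computes $\partial_{t}\Theta=\Gamma_{j}'(t+t_{0})-\Gamma_{j}'(t_{0})=t\int_{0}^{1}\Gamma_{j}''(\theta t+t_{0})\,\textrm{d}\theta$ and $\partial_{t}^{2}\Theta=\Gamma_{j}''(t+t_{0})$. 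By Lemma \ref{lemma 2.1}(iii), $|\Gamma_{j}''|$ is bounded above and below by positive constants on $[\tfrac{1}{2},2]$ independently of $j$, so $\Gamma_{j}''$ has constant sign $\epsilon\in\{\pm 1\}$ there; on the support of $\psi(\cdot+t_{0})$ this yields $|\partial_{t}\Theta|\approx|t|$ and $|\partial_{t}^{2}\Theta|\approx 1$ uniformly in $j$. Hence $t=0$ is the unique non-degenerate critical point.

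Introduce a smooth cutoff $\chi$ equal to $1$ on a small neighborhood of $0$ and write $h=h_{\mathrm{near}}+h_{\mathrm{far}}$ with $h_{\mathrm{near}}$ containing the factor $\chi(t)$. On the support of $(1-\chi(t))\psi(t+t_{0})$ the derivative $|\partial_{t}\Theta|$ is bounded below, so the operator $Lf:=(i\lambda\partial_{t}\Theta)^{-1}\partial_{t}f$ is well defined with symbol bounded uniformly in $j$ (via Lemma \ref{lemma 2.1}), and iterating it gives $|\partial_{\lambda}^{\alpha}\partial_{t_{0}}^{\beta}h_{\mathrm{far}}(\lambda,t_0)|\lesssim_{M}|\lambda|^{-M}$ for every $M\in\mathbb{N}$. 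For $h_{\mathrm{near}}$ I invoke the Morse lemma with parameters: the equation $\epsilon\Theta(t,t_{0})=s^{2}/2$ is solved by a smooth map $t=\rho(s,t_{0})$ with $\rho(0,t_{0})=0$ and $\partial_{s}\rho(0,t_{0})=\sqrt{2/|\Gamma_{j}''(t_{0})|}$; by the implicit function theorem combined with Lemma \ref{lemma 2.1}(iv), $\rho$ is $C^{N-2}$ in $(s,t_{0})$ with derivatives bounded independently of $j$. After this change of variables,
$$h_{\mathrm{near}}(\lambda,t_{0})=\int_{\mathbb{R}}e^{-i\epsilon\lambda s^{2}/2}\tilde{\psi}(s,t_{0})\,\textrm{d}s,$$
where $\tilde\psi(s,t_{0}):=\chi(\rho(s,t_{0}))\psi(\rho(s,t_{0})+t_{0})\partial_{s}\rho(s,t_{0})$ is compactly supported in $s$ and $C^{N-2}$ with bounds uniform in $j$.

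Since the new phase is independent of $t_{0}$, $\partial_{t_{0}}^{\beta}$ acts only on $\tilde\psi$, while $\partial_{\lambda}^{\alpha}$ brings down the factor $(-i\epsilon s^{2}/2)^{\alpha}$. A classical Fresnel-type estimate (obtained by rescaling $s\mapsto|\lambda|^{-1/2}s$ and iterating the identity $e^{-i\epsilon\lambda s^{2}/2}=(-i\epsilon\lambda s)^{-1}\partial_{s}(e^{-i\epsilon\lambda s^{2}/2})$ away from $s=0$, with a small neighborhood of $s=0$ handled trivially by the compact support) yields
$$\left|\int_{\mathbb{R}}e^{-i\epsilon\lambda s^{2}/2}s^{2\alpha}\varphi(s)\,\textrm{d}s\right|\lesssim|\lambda|^{-\frac{1}{2}-\alpha}$$
for any $\varphi\in C_{c}^{k}$ with $k$ depending on $\alpha$, with implicit constant controlled by finitely many derivatives of $\varphi$. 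Applying this to $\varphi=\partial_{t_{0}}^{\beta}\tilde\psi$, which remains $C^{k}$ uniformly in $j$ whenever $\beta<N-2$, gives the desired bound on $\partial_{\lambda}^{\alpha}\partial_{t_{0}}^{\beta}h_{\mathrm{near}}$, and combined with the rapid decay of $h_{\mathrm{far}}$ completes the proof. The main technical hurdle is the uniform-in-$j$ control of the Morse change of variables $\rho$: it amounts to tracking the bounds of Lemma \ref{lemma 2.1}(iv) through the implicit function theorem applied to $s^{2}=2\epsilon\Theta(t,t_{0})$, which is straightforward but tedious bookkeeping and determines the precise constraint $\beta<N-2$.
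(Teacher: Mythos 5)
Your argument is correct, but it takes a genuinely different route from the one in the paper. You first recognize the phase as the exact Taylor remainder $t^{2}\eta(t,t_{0})=\Gamma_{j}(t+t_{0})-\Gamma_{j}(t_{0})-t\Gamma_{j}'(t_{0})$ and then apply the Morse lemma with parameters, reducing $h_{\mathrm{near}}$ to a pure Fresnel integral $\int e^{-i\epsilon\lambda s^{2}/2}\tilde\psi(s,t_{0})\,\mathrm{d}s$ whose phase is exactly quadratic and independent of $t_{0}$; after that, the $\partial_{\lambda}^{\alpha}$ and $\partial_{t_{0}}^{\beta}$ derivatives decouple trivially (the former brings down $s^{2\alpha}$, the latter lands only on the amplitude) and a single classical estimate finishes the proof. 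The paper instead works directly on the original integral: it expands $\partial_{t_{0}}^{\beta}$ by the Leibniz rule and the Fa\`a di Bruno formula, reducing to integrals $\int e^{-i\lambda t^{2}\eta(t,t_{0})}t^{2\iota}u(t,t_{0})\,\mathrm{d}t$, and estimates each by splitting at the scale $\varepsilon=|\lambda|^{-1/2}$ and integrating by parts against the frozen quadratic phase $e^{-i\frac{\Gamma_{j}''(t_{0})}{2}\lambda t^{2}}$, treating the cubic remainder as part of the amplitude. Your route buys a cleaner derivative calculus at the end (no Fa\`a di Bruno combinatorics, and the van der Corput splitting is invoked only once for the model phase $s^{2}$), at the price of having to track the bounds of Lemma \ref{lemma 2.1} through the implicit function theorem to get uniform-in-$j$ control of $\rho$ and its $(s,t_{0})$-derivatives; you correctly identify this as the main technical step but defer it as bookkeeping, which is acceptable since it is standard. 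Two cosmetic points: $\partial_{s}\rho(0,t_{0})$ should be $|\Gamma_{j}''(t_{0})|^{-1/2}$ rather than $\sqrt{2/|\Gamma_{j}''(t_{0})|}$ (writing $s=t\sqrt{2\epsilon\Theta/t^{2}}$ makes this transparent), and the number of $s$-derivatives of $\partial_{t_{0}}^{\beta}\tilde\psi$ you need grows with $\alpha$, so the precise admissible range of $(\alpha,\beta)$ in terms of $N$ deserves a remark --- though the paper's own proof is equally loose on this point since $N$ is taken large.
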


\begin{proof}[Proof (of Lemma \ref{lemma 2.3}).] We first show that $|\partial_{t_{0}}^{\beta}h(\lambda,t_{0})|\lesssim |\lambda|^{-\frac{1}{2}}$ for all $\beta<N-2$. By the Leibniz rule and the Fa\`{a} di Bruno formula (see, for instance, \cite{Jo}), we can write $\partial_{t_{0}}^{\beta}h(\lambda,t_{0})$ as
\begin{align}\label{eq:2.14}
\sum_{m+n=\beta}\frac{\beta!}{m!n!} \int_{\mathbb{R}}  e^{-i\lambda t^2\eta(t,t_0) } \sum_{\nabla} & \frac{n!}{b_1 !\cdots b_n !} (-i\lambda t^2 )^{\iota} \\
&\times\left(\frac{\partial_{t_{0}}^{1}\eta(t,t_0) }{1!}\right)^{b_1} \cdots \left(\frac{\partial_{t_{0}}^{n}\eta(t,t_0) }{n!}\right)^{b_n} \partial_{t_{0}}^{m}\psi(t+t_0)\,\textrm{d}t\nonumber
\end{align}
for all $\beta<N-2$, where the sum $\sum_{\nabla}$ is over all different solutions in nonnegative integers $b_1,\cdots,b_n$ of $b_1+2b_2+\cdots+nb_n=n$, and $\iota:=b_1+\cdots+b_n$. Since the sum $\sum_{m+n=\beta}\sum_{\nabla}$ in \eqref{eq:2.14} contains only a finite number of indices, it suffices to obtain that
\begin{align}\label{eq:2.15}
\left| \int_{\mathbb{R}}  e^{-i\lambda t^2\eta(t,t_0) }  t^{2\iota} u(t,t_0)\,\textrm{d}t\right|\lesssim |\lambda|^{-\frac{1}{2}-\iota},
\end{align}
 where
 \begin{align}\label{eq:2.015}
 u(t,t_0):=\left(\frac{\partial_{t_{0}}^{1}\eta(t,t_0) }{1!}\right)^{b_1} \cdots \left(\frac{\partial_{t_{0}}^{n}\eta(t,t_0) }{n!}\right)^{b_n} \partial_{t_{0}}^{m}\psi(t+t_0)
 \end{align}
 is a compactly supported smooth function.

Let $I_{\iota}$ be the integral in \eqref{eq:2.15}. To estimate it, let $\vartheta$ be a nonnegative smooth function on $\mathbb{R}$, identically equal one on $\{t\in \mathbb{R}:\  |t|\leq 1\}$ and vanishing off $\{t\in \mathbb{R}:\  |t|\leq 2\}$, we shall break up $I_{\iota}$ into the following two parts:
\begin{align*}
 I_{\iota}  =  \int_{\mathbb{R}}  e^{-i\lambda t^2\eta(t,t_0) }  t^{2\iota} u(t,t_0)\vartheta\left(\frac{t}{\varepsilon}\right)\,\textrm{d}t+\int_{\mathbb{R}}  e^{-i\lambda t^2\eta(t,t_0) }  t^{2\iota} u(t,t_0)\left(1-\vartheta\left(\frac{t}{\varepsilon}\right)\right)\,\textrm{d}t
 =: I+II,
\end{align*}
where $\varepsilon>0$ will be determined momentarily.

The first integral $I$ is easy to handle. Notice that $t_0\in (\frac{1}{2},2)$, $t+t_0\in [\frac{1}{2},2]$ and $\theta \in (0,1)$, we may get $\theta t+t_0 \in [\frac{1}{2},2]$. This, combined with Lemma \ref{lemma 2.1}, implies that $|\Gamma_{j}^{(n+2)}(t_{0})|\lesssim 1$ and  $|\Gamma_{j}^{(n+3)}(\theta t+t_{0})|\lesssim 1$ for all $ n<N-2$. Consequently, we obtain $|u(t,t_0)| \lesssim 1$. Therefore, by taking absolute values, we have
\begin{align}\label{eq:2.16}
|I|\lesssim \left| \int_{|t|\leq 2 \varepsilon}    t^{2\iota} \,\textrm{d}t\right|\lesssim \varepsilon^{2\iota+1}.
\end{align}

It remains to estimate the second integral $II$, we shall need to integrate by parts. Let $$D^{*}f(t):=\frac{\textrm{d}}{\textrm{d}t}\left( \frac{f(t)}{ i\Gamma''_j(t_0) \lambda t }\right).$$
We can rewrite $II$ as
$$\int_{\mathbb{R}}  e^{-i\frac{\Gamma''_j(t_0)}{2}\lambda t^2 }  (D^{*})^{\kappa}  \left[t^{2\iota}  e^{-i\frac{\lambda}{2} t^3 \int_0^1(1-\theta)^2\Gamma'''_j(\theta t+t_0)\,\textrm{d}\theta} u(t,t_0)\left(1-\vartheta\left(\frac{t}{\varepsilon}\right)\right) \right]   \,\textrm{d}t$$
for all $\kappa\in \mathbb{N}$. Furthermore, by taking absolute values, enable us to bound $|II|$ by
$$\int_{|t|>\varepsilon}  \left|  (D^{*})^{\kappa}  \left[t^{2\iota}  e^{-i\frac{\lambda}{2} t^3 \int_0^1(1-\theta)^2\Gamma'''_j(\theta t+t_0)\,\textrm{d}\theta} u(t,t_0)\left(1-\vartheta\left(\frac{t}{\varepsilon}\right)\right) \right] \right|  \,\textrm{d}t.$$
On the other hand, from $(\textrm{iii})$ of Lemma \ref{lemma 2.1}, we have $|\Gamma''_j(t_0)|\gtrsim 1$. By this observation together with the product rule for differentiation, we have that
\begin{align}\label{eq:2.17}
|II|\lesssim \lambda^{-\kappa}\int_{|t|\geq\varepsilon} |t|^{2\iota-2\kappa}\,\textrm{d}t\lesssim\lambda^{-\kappa}\varepsilon^{2\iota-2\kappa+1}
\end{align}
if $2\iota-2\kappa+1<0$.

Putting together our estimates \eqref{eq:2.16} and \eqref{eq:2.17}, we may obtain
\begin{align}\label{eq:2.18}
|I_{\iota}|\lesssim \varepsilon^{2\iota+1}+\lambda^{-\kappa}\varepsilon^{2\iota-2\kappa+1}
\end{align}
if $\kappa$ satisfying $2\iota-2\kappa+1<0$. It is easy to see that the right side of \eqref{eq:2.18} is smallest when the two summands agree, i.e., $\varepsilon:=\lambda^{-\frac{1}{2}}$, which gives
$$|I_\iota|\lesssim |\lambda|^{-\frac{1}{2}-\iota}.$$
This is \eqref{eq:2.15} as desired.

We now turn to $|\partial_{\lambda}^{\alpha}\partial_{t_{0}}^{\beta}h(\lambda,t_{0})|\lesssim |\lambda|^{-\frac{1}{2}-\alpha}$ for all $\alpha,\beta\in \mathbb{N}$ and $\beta<N-2$. In fact, by the Leibniz rule, it follows that
$$\partial_{\lambda}^{\alpha}\left(e^{-i\lambda t^2\eta(t,t_0) } \lambda^{\iota} \right)= \sum_{w+v=\alpha}\frac{\alpha!}{w!v!}\frac{{\iota}!}{(\iota-w)!} e^{-i\lambda t^2\eta(t,t_0) } (-i t^2\eta(t,t_0))^v \lambda^{\iota-w}$$
if $w$ satisfying $w\leq \iota$, and zero otherwise. This, combined with \eqref{eq:2.14} and \eqref{eq:2.015},  we can then write $\partial_{\lambda}^{\alpha}\partial_{t_{0}}^{\beta}h(\lambda,t_{0})$ as
\begin{align}\label{eq:2.20}
\sum_{m+n=\beta}\frac{\beta!}{m!n!} \sum_{\nabla} \frac{n!}{b_1 !\cdots b_n !}\sum_{w+v=\alpha}&\frac{\alpha!}{w!v!}\frac{\iota!}{(\iota-w)!} \\
&\times\lambda^{\iota-w}  \int_{\mathbb{R}}  e^{-i\lambda t^2\eta(t,t_0) }t^{2v+2\iota} (-i\eta(t,t_0))^v  (-i )^{\iota} u(t,t_0)\,\textrm{d}t\nonumber
\end{align}
for all $\alpha,\beta\in \mathbb{N}$ and $\beta<N-2$. As in the treatment of \eqref{eq:2.15}, we claim that the following holds:
\begin{align}\label{eq:2.21}
\left| \int_{\mathbb{R}}  e^{-i\lambda t^2\eta(t,t_0) }t^{2v+2\iota} (-i\eta(t,t_0))^v  (-i )^{\iota} u(t,t_0)\,\textrm{d}t\right|\lesssim |\lambda|^{-\frac{1}{2}-v-\iota}.
\end{align}
Noting that the sum $\sum_{m+n=\beta}\sum_{\nabla}\sum_{w+v=\alpha}$ in \eqref{eq:2.20} contains only a finite number of indices and $w+v=\alpha$. This, combined with \eqref{eq:2.21}, yields
\begin{align*}
\left|\partial_{\lambda}^{\alpha}\partial_{t_{0}}^{\beta}h(\lambda,t_{0})\right|\lesssim |\lambda|^{-\frac{1}{2}-\alpha}
\end{align*}
for all $\alpha,\beta\in \mathbb{N}$ and $\beta<N-2$ as required. Hence, we finish the proof of Lemma \ref{lemma 2.3}.
\end{proof}

Now, for $A_{j,k}$ defined in \eqref{eq:2.22}, we continue the estimate by showing that $a_{j,k}(x,u,\xi)$ is a symbol of order $-\frac{1}{2}$ by Lemma \ref{lemma 2.3}. For $t_0=(\Gamma'_j)^{-1}(-\frac{\xi_1}{\xi_2})$, notice that $|\xi|\approx 2^{k}\geq 1$ and $\frac{|\xi_1|}{|\xi_2|}\approx 1$, by this observation together with $(\textrm{v})$ of Lemma \ref{lemma 2.1} yields
\begin{align*}
\left|\partial_{\xi}^{\alpha}t_{0}\right|\lesssim (1+|\xi|)^{-|\alpha|}
\end{align*}
for all $\alpha\in \mathbb{N}_0^2$ with $|\alpha|<N$. This, combined with Lemma \ref{lemma 2.3}, implies that
\begin{align}\label{eq:2.23}
\left|\partial_{u}^{\beta}\partial_{\xi}^{\alpha}m_{j,k}(u,\xi)\right|\lesssim (1+|\xi|)^{-\frac{1}{2}-|\alpha|}
\end{align}
for all $(\alpha,\beta)\in \mathbb{N}^2_0\times \mathbb{N}_0$ with $|\alpha|+|\beta|<N$. Consequently,
\begin{align}\label{eq:2.24}
\left|\partial_{x}^{\gamma}\partial_{u}^{\beta}\partial_{\xi}^{\alpha}a_{j,k}(x,u,\xi)\right|
\leq(1+|\xi|)^{-\frac{1}{2}-|\alpha|}
\end{align}
for all $(\alpha,\beta,\gamma)\in \mathbb{N}^2_0\times \mathbb{N}_0\times \mathbb{N}^2_0$ with $|\alpha|+|\beta|+|\gamma|<N$, where the implicit constant is independent of $j$ and $k$. Therefore, $a_{j,k}(x,u,\xi)$ is a symbol of order $-\frac{1}{2}$. By simple calculation, we may obtain that $a_{j,k}(x,u,\xi)(1+|\xi|)^{\frac{1}{2}}$ is a symbol of order zero. We also note that $\textrm{supp} ~a_{j,k} \subset \{x\in \mathbb{R}^2:\ |x|\leq 2\varpi\}\times[\frac{1}{2},\frac{5}{2}]\times\{\xi\in \mathbb{R}^2:\ |\xi_1|\approx|\xi_2|\approx2^{k}\}$ and $\Psi(x,u,\cdot)$ is a homogeneous function of degree one.

We turn to verify the conditions $\textrm{(i)}, \textrm{(ii)}$ and $\textrm{(iii)}$ in Lemmas \ref{lemma 2.5} and \ref{lemma 2.4}. Recall that $\Psi(x,u,\xi)=x\cdot\xi+\varphi(u,\xi,t_0)=x_1\xi_1+x_2\xi_2-u\xi_1 t_0- u \xi_2 \Gamma_j(t_0)$ and $t_0=(\Gamma'_j)^{-1}(-\frac{\xi_1}{\xi_2})$. From
\begin{align}\label{eq:2.25}
\partial^2_{z\xi}\Psi=\left(
\begin{array}{ccc}
\partial^2_{x_1 \xi_1} \Psi~& ~\partial^2_{x_2 \xi_1} \Psi ~& ~\partial^2_{u\xi_1} \Psi  \\
\partial^2_{x_1 \xi_2} \Psi~& ~\partial^2_{x_2 \xi_2} \Psi ~& ~\partial^2_{u\xi_2} \Psi
\end{array}
\right)=\left(
\begin{array}{ccc}
1~& ~0 ~& ~-t_0 \\
0~& ~1 ~& ~-  \Gamma_j(t_0)
\end{array}
\right),
\end{align}
It is easy to see that the rank of the mixed Hessian of $\Psi$ is $2$, this is the condition $(\textrm{i})$ of Lemma \ref{lemma 2.5} or $(\textrm{i})$ of Lemma \ref{lemma 2.4} as desired. For the condition $\textrm{(ii)}$ in Lemmas \ref{lemma 2.5} and \ref{lemma 2.4}, we first write $$\partial_z \Psi=(\partial_{x_1} \Psi, \partial_{x_2} \Psi, \partial_u\Psi)=(\xi_1, \xi_2, -\xi_1 t_0-  \xi_2 \Gamma_j(t_0)).$$
Let $\theta:=(\theta_1,\theta_2,\theta_3)\in \mathbb{S}^3$ be the direction (unique up to sign) satisfying $\nabla_\xi\langle\partial_z\Psi,\theta\rangle=0$, combining the fact that $\Gamma'_j(t_0)=-\frac{\xi_1}{\xi_2}$, we then have
\begin{align*}
\langle\partial_z\Psi,\theta\rangle=\xi_1\theta_1+\xi_2\theta_2-\xi_1\theta_3t_0-\xi_2\theta_3\Gamma_j(t_0)
\end{align*}
and
\begin{eqnarray*}
\left\{\aligned
&\theta_1-\theta_3t_0=0 ;\\
&\theta_2 -\theta_3\Gamma_j(t_0)=0.
\endaligned\right.
\end{eqnarray*}
Therefore,
\begin{align*}
\theta=\theta(\xi)=\left(\frac{t_0}{\sqrt{1+t_0^2+\Gamma_j(t_0)^2}}, \frac{\Gamma_j(t_0)}{\sqrt{1+t_0^2+\Gamma_j(t_0)^2}}, \frac{1}{\sqrt{1+t_0^2+\Gamma_j(t_0)^2}}\right).
\end{align*}
Furthermore,
\begin{align*}
\partial^2_{\xi\xi}\langle\partial_z\Psi,\theta(\eta)\rangle~|_{~\xi=\eta}=\left(
\begin{array}{ccc}
\partial^2_{\xi_1 \xi_1} \langle\partial_z\Psi,\theta(\eta)\rangle~& ~\partial^2_{ \xi_1\xi_2}\langle\partial_z\Psi,\theta(\eta)\rangle \\
\partial^2_{\xi_2\xi_1}\langle\partial_z\Psi,\theta(\eta)\rangle~& ~\partial^2_{\xi_2 \xi_2} \langle\partial_z\Psi,\theta(\eta)\rangle
\end{array}
\right)~\bigg{|}_{~\xi=\eta}.
\end{align*}
By calculation, it now follows that
\begin{eqnarray*}
\left\{\aligned
&\partial^2_{\xi_1 \xi_1} \langle\partial_z\Psi,\theta(\eta)\rangle=-\theta_3(\eta)\partial_{\xi_1}t_0 ;\\
&\partial^2_{ \xi_1\xi_2}\langle\partial_z\Psi,\theta(\eta)\rangle=-\theta_3(\eta)\partial_{\xi_2}t_0 ;\\
&\partial^2_{\xi_2\xi_1}\langle\partial_z\Psi,\theta(\eta)\rangle= -\theta_3(\eta)\Gamma'_j(t_0) \partial_{\xi_1}t_0;\\
&\partial^2_{\xi_2 \xi_2} \langle\partial_z\Psi,\theta(\eta)\rangle=-\theta_3(\eta)\Gamma'_j(t_0) \partial_{\xi_2}t_0,
\endaligned\right.
\end{eqnarray*}
which gives the desired identity
\begin{align}\label{eq:2.26}
\partial^2_{\xi\xi}\langle\partial_z\Psi,\theta(\eta)\rangle~|_{~\xi=\eta}=\left(
\begin{array}{ccc}
-\theta_3(\eta)\partial_{\xi_1}t_0~& ~-\theta_3(\eta)\partial_{\xi_2}t_0 \\
-\theta_3(\eta)\Gamma'_j(t_0) \partial_{\xi_1}t_0~& ~-\theta_3(\eta)\Gamma'_j(t_0) \partial_{\xi_2}t_0
\end{array}
\right).
\end{align}
Notice that $\partial_{\xi_1}t_0=-\frac{1}{\xi_2}\frac{1}{\Gamma''_j(t_0)}$ and $\partial_{\xi_2}t_0=\frac{\xi_1}{\xi_2^2}\frac{1}{\Gamma''_j(t_0)}$, one obtains $\partial^2_{ \xi_1\xi_2}\langle\partial_z\Psi,\theta(\eta)\rangle=\partial^2_{\xi_2\xi_1}\langle\partial_z\Psi,\theta(\eta)\rangle$.
Furthermore, it is easy to see that
$$\textrm{rank}~\partial^2_{\xi\xi}\langle\partial_z\Psi,\theta(\eta)\rangle~|_{~\xi=\eta}=1,$$ this is the condition $(\textrm{ii})$ of Lemma \ref{lemma 2.5} or $(\textrm{i})$ of Lemma \ref{lemma 2.4} as desired. From \eqref{eq:2.26}, it is clear that there is only one nonzero eigenvalue of $\partial^2_{\xi\xi}\langle\partial_z\Psi,\theta(\eta)\rangle~|_{~\xi=\eta}$, which trivially leads to the condition $\textrm{(iii)}$ of Lemma \ref{lemma 2.4}.

Therefore, for any $ p\in[6,\infty)$ and $\delta\in(0,\infty)$, by Lemma \ref{lemma 2.5}, we have
\begin{align}\label{eq:2.27}
\|A_{j,k}f\|_{L^{p}(\mathbb{R}^{3})}= 2^{\left(-\frac{2}{p}+\delta\right) k}\left\|2^{\left(\frac{2}{p}-\delta\right) k}A_{j,k}f\right\|_{L^{p}(\mathbb{R}^{3})} \lesssim 2^{\left(-\frac{2}{p}+\delta\right) k}\|f\|_{L^{p}(\mathbb{R}^{2})}
\end{align}
with a bound independent of $j$. On the other hand, from \eqref{eq:2.23}, it is easy to obtain that the multiplier of the Fourier integral operators $A_{j,k}$ in \eqref{eq:2.22} can be bounded from above by $2^{-\frac{k}{2}}$. Hence, by Plancherel's theorem, one has
\begin{align}\label{eq:2.28}
\|A_{j,k}f\|_{L^{2}(\mathbb{R}^{3})}\lesssim 2^{-\frac{k}{2}}\|f\|_{L^{2}(\mathbb{R}^{2})}.
\end{align}
By interpolation, for any $ p\in[2,6]$ and $\delta\in(0,\infty)$, we get
\begin{align}\label{eq:2.29}
\|A_{j,k}f\|_{L^{p}(\mathbb{R}^{3})}\lesssim 2^{\left(-\frac{1}{4}-\frac{1}{2p}+3(\frac{1}{2}-\frac{1}{p})\delta\right)k}\|f\|_{L^{p}(\mathbb{R}^{2})},
\end{align}
where the implicit constant is independent of $j$ and $k$. By Lemma \ref{lemma 2.4}, for any $(\frac{1}{p}, \frac{1}{q})\in \{(\frac{1}{p}, \frac{1}{q}) :\ 0\leq \frac{1}{q} \leq \frac{3}{14}, \frac{3}{q}\leq 1-\frac{1}{p}, \frac{1}{q}\leq  \frac{3}{5p} \}$ and $\delta\in(0,\infty)$, one deduces that
\begin{align}\label{eq:2.30}
\|A_{j,k}f\|_{L^{q}(\mathbb{R}^{3})}= 2^{\left(\frac{1}{p}-\frac{3}{q}+\delta\right) k}\left\|2^{\left(-\frac{1}{p}+\frac{3}{q}-\delta\right) k}A_{j,k}f\right\|_{L^{q}(\mathbb{R}^{3})} \lesssim 2^{\left(\frac{1}{p}-\frac{3}{q}+\delta\right) k}\|f\|_{L^{p}(\mathbb{R}^{2})}
\end{align}
with a bound independent of $j$.

Observe that for any $(\frac{1}{p}, \frac{1}{q})\in \{(\frac{1}{p}, \frac{1}{q}) :\ 0\leq \frac{1}{q} \leq \frac{3}{14}, \frac{3}{q}\leq 1-\frac{1}{p}, \frac{1}{q}\leq  \frac{3}{5p} \}$, by setting $\delta:=\frac{1}{q}-\frac{1}{2p}$ in \eqref{eq:2.30}, we deduce that \eqref{eq:2.103} is established with $\mu:=\frac{2}{q}-\frac{1}{2p}$. Here, we must require $\frac{1}{2p}<\frac{1}{q}$, which produces one of the boundary of $\Delta$ and also one of the vertex $(\frac{2}{5},\frac{1}{5})$ of $\Delta$. The another boundary of $\Delta$, i.e., the dotted line connecting $(\frac{2}{5},\frac{1}{5})$ and $(\frac{1}{2},\frac{1}{2})$, will be produced by interpolation, which further leads to the restriction that $(\frac{1}{p}, \frac{1}{q})$ satisfy $\frac{1}{q}>\frac{3}{p}-1 $. Since there are two cases for obtaining the $L^{p}(\mathbb{R}^{2})\rightarrow L^{p}(\mathbb{R}^{3})$ estimates of $A_{j,k}$, see \eqref{eq:2.27} and \eqref{eq:2.29}, we will establish \eqref{eq:2.103} by considering the following two cases:
\begin{enumerate}
  \item[$\bullet$] Consider $(\frac{1}{p}, \frac{1}{q})\in \{(\frac{1}{p}, \frac{1}{q}) :\ \frac{1}{q} \leq\frac{1}{7p}+\frac{1}{7} \} \cap\Delta$. Let $p:=p_0$ and $q:=q_0$ in \eqref{eq:2.27}, by taking $\delta:=\frac{1}{2p_0}$, we know that \begin{align}\label{eq:2.38}
\left\|A_{j,k}f\right\|_{L^{q_0}(\mathbb{R}^{3})} \lesssim 2^{- \frac{3}{2p_0} k} \|f\|_{L^{p_0}(\mathbb{R}^{2})}
\end{align}
for all $(\frac{1}{p_0},\frac{1}{q_0})$ satisfying $\frac{1}{p_0}=\frac{1}{q_0}\in (0,\frac{1}{6}]$. We remark that \eqref{eq:2.38} implies that \eqref{eq:2.103} is established with $\mu:=\frac{3}{2p}$ for all $\frac{1}{p}=\frac{1}{q}\in (0,\frac{1}{6}]$. Let $p:=p_1$ and $q:=q_1$ in \eqref{eq:2.30}, by letting $\delta:=\frac{1}{q_1}-\frac{1}{2p_1}$, we deduce that
\begin{align}\label{eq:2.39}
\left\|A_{j,k}f\right\|_{L^{q_1}(\mathbb{R}^{3})} \lesssim 2^{- \left(\frac{2}{q_1}-\frac{1}{2p_1}\right)k} \|f\|_{L^{p_1}(\mathbb{R}^{2})}
\end{align}
for all $(\frac{1}{p_1}, \frac{1}{q_1})\in \Delta \cap\{(\frac{1}{p_1}, \frac{1}{q_1}) :\ 0\leq \frac{1}{q_1} \leq \frac{3}{14}, \frac{3}{q_1}\leq 1-\frac{1}{p_1}, \frac{1}{q_1}\leq \frac{3}{5p_1} \}$. For any $(\frac{1}{p}, \frac{1}{q})\in \{(\frac{1}{p}, \frac{1}{q}) :\ \frac{1}{q} \leq\frac{1}{7p}+\frac{1}{7} \} \cap\Delta$, by interpolation, there exists a constant $t_1\in (0,1)$ such that
\begin{align*}\frac{1}{q}=\frac{1-t_1}{q_0}+\frac{t_1}{q_1}~~~~\textrm{and} ~~~~\frac{1}{p}=\frac{1-t_1}{p_0}+\frac{t_1}{p_1}\end{align*}
and
\begin{align}\label{eq:2.40}
\left\|A_{j,k}f\right\|_{L^{q}(\mathbb{R}^{3})} \lesssim 2^{- \frac{3}{2p_0}(1-t_1) k} 2^{- \left(\frac{2}{q_1}-\frac{1}{2p_1}\right)t_1k} \|f\|_{L^{p}(\mathbb{R}^{2})}.
\end{align}
By simple calculation, we see that \eqref{eq:2.40} leads to \eqref{eq:2.103} is established with $\mu:=\frac{3}{2p_0}(1-t_1)+(\frac{2}{q_1}-\frac{1}{2p_1})t_1\in(\frac{1}{q}, \infty)$.
  \item[$\bullet$] Consider $(\frac{1}{p}, \frac{1}{q})\in \{(\frac{1}{p}, \frac{1}{q}) :\ \frac{1}{q} > \frac{1}{7p}+\frac{1}{7} \} \cap\Delta$. Let $p:=p_0$ and $q:=q_0$ in \eqref{eq:2.29}, by setting $\delta:=\frac{1}{12}$, it follows that
    \begin{align}\label{eq:2.41}
\left\|A_{j,k}f\right\|_{L^{q_0}(\mathbb{R}^{3})} \lesssim 2^{- \left(\frac{1}{8}+\frac{3}{4p_0}\right) k} \|f\|_{L^{p_0}(\mathbb{R}^{2})}
\end{align}
for all $(\frac{1}{p_0},\frac{1}{q_0})$ satisfying $\frac{1}{p_0}=\frac{1}{q_0}\in (\frac{1}{6}, \frac{1}{2})$. For any $(\frac{1}{p}, \frac{1}{q})\in \{(\frac{1}{p}, \frac{1}{q}) :\ \frac{1}{q} >\frac{1}{7p}+\frac{1}{7} \} \cap\Delta$, by interpolation between \eqref{eq:2.41} with \eqref{eq:2.39}, there exists a constant $t_2\in (0,1)$ such that
\begin{align*}\frac{1}{q}=\frac{1-t_2}{q_0}+\frac{t_2}{q_1}~~~~\textrm{and} ~~~~\frac{1}{p}=\frac{1-t_2}{p_0}+\frac{t_2}{p_1}\end{align*}
and
\begin{align*}
\left\|A_{j,k}f\right\|_{L^{q}(\mathbb{R}^{3})} \lesssim 2^{- \left(\frac{1}{8}+\frac{3}{4p_0}\right)(1-t_2) k}  2^{- \left(\frac{2}{q_1}-\frac{1}{2p_1}\right)t_2k} \|f\|_{L^{p}(\mathbb{R}^{2})},
\end{align*}
which implies that \eqref{eq:2.103} is established with $\mu:=(\frac{1}{8}+\frac{3}{4p_0})(1-t_2)+(\frac{2}{q_1}-\frac{1}{2p_1})t_2\in(\frac{1}{q}, \infty)$.

\end{enumerate}

Putting things together we have \eqref{eq:2.103}. Therefore, we finish the proof of Theorem \ref{thm1}.

\section{Proof of Theorem \ref{thm2}}

In this section, we consider the necessity of the regions of $(\frac{1}{p}, \frac{1}{q})$ in Theorem \ref{thm1}, i.e., Theorem \ref{thm2}. We split four parts to finish our proofs.
\begin{enumerate}
  \item[$\bullet$] Proof of (i) of Theorem \ref{thm2}. Let $S$ be the $2^{-m}$ neighborhood of the curve $(-t,-\gamma(t))$ with $t\in (0,1]$ and $m\in\mathbb{N}$ large enough. It is easy to see that $(x_1-t, x_2-\gamma(t))\in S$ if $(x_1,x_2)\in B(0, 2^{-m})$ and $t\in(0, 1]$. Furthermore, let us set $f:=\chi_{S}$, we have
      \begin{align*}
      \int_{0}^{1}\chi_{S}(x_1-t,x_2-\gamma(t))\,\textrm{d}t= 1
      \end{align*}
      for any $(x_1,x_2)\in B(0, 2^{-m})$. Then it follows that
      \begin{align*}
      \left\|\int_{0}^{1}\chi_{S}(x_1-t,x_2-\gamma(t))\,\textrm{d}t\right\|_{L^{q}(B(0, 2^{-m}))}\gtrsim 2^{-\frac{2m}{q}} ~~\textrm{and}~~ \|f\|_{L^{p}(\mathbb{R}^{2})}\approx 2^{-\frac{m}{p}}.
      \end{align*}
      Apply the estimate $\|M_{\gamma}f\|_{L^{q}(\mathbb{R}^{2})} \lesssim \|f\|_{L^{p}(\mathbb{R}^{2})}$ to deduce that
      \begin{align*}
      2^{-\frac{2m}{q}} \lesssim 2^{-\frac{m}{p}}.
      \end{align*}
      As a consequence, we have that $(\frac{1}{p},\frac{1}{q})$ satisfy $\frac{1}{2p}\leq\frac{1}{q}$ since $m\in\mathbb{N}$ may tend to infinity. This concludes the proof of (i) of Theorem \ref{thm2}.

  \item[$\bullet$] Proof of (ii) of Theorem \ref{thm2}. Notice that $M_{\gamma}$ commutes with translations, then it is not bounded from $L^p(\mathbb{R}^2)$ to $L^q(\mathbb{R}^2)$ for all $p>q$. This, in combination with the estimate $\|M_{\gamma}f\|_{L^{q}(\mathbb{R}^{2})} \lesssim \|f\|_{L^{p}(\mathbb{R}^{2})}$, shows that $(\frac{1}{p},\frac{1}{q})$ satisfy $\frac{1}{q}\leq\frac{1}{p}$. Hence, we obtain (ii) of Theorem \ref{thm2}.

  \item[$\bullet$] Proof of (iii) of Theorem \ref{thm2}. One may assume without loss of generality that $\gamma(1)=1$. Let $e_1:=(-\frac{1}{\sqrt{1+\gamma'(1)^2}}, -\frac{\gamma'(1)}{\sqrt{1+\gamma'(1)^2}})$ and $e_2:=(-\frac{\gamma'(1)}{\sqrt{1+\gamma'(1)^2}}, \frac{1}{\sqrt{1+\gamma'(1)^2}})$ be two orthogonal unit vectors, and define
      $$S:=\left\{(x_1,x_2)\in \mathbb{R}^{2}:\ |(x_1,x_2)\cdot e_1|\leq 5 \cdot2^{-m} ~~\textrm{and}~~ |(x_1,x_2)\cdot e_2|\leq (1+2\gamma''(1)) 2^{-2m} \right\}.$$
      We choose a subset $\{u_i\}\subset [1,2]$ such that $|u_{i+1}-u_i|=2\cdot 2^{-2m}$ for all $i$. For each $u_i$, define
      $$D_{u_i}:=\left\{(x_1,x_2)\in \mathbb{R}^{2}:\ |\left((x_1,x_2)-(u_i,u_i)\right)\cdot e_1|\leq 2^{-m} ~\textrm{and}~ |\left((x_1,x_2)-(u_i,u_i)\right)\cdot e_2|\leq 2^{-2m} \right\}.$$
      For any $(x_1,x_2)\in D_{u_i}$ and $t\in[1-\frac{2^{-m}}{\sqrt{1+\gamma'(1)^2}}, 1]$, applying Taylor's theorem gives
      $$\gamma(t)=1+\gamma'(1)(t-1)+\frac{\gamma''(1)}{2}(t-1)^2+o\left((t-1)^2\right),$$
      where $o((t-1)^2)$ means $\frac{o((t-1)^2)}{(t-1)^2}\rightarrow 0$ as $t\rightarrow 1^-$. Furthermore, by simple calculation, we have
      \begin{align*}
      &\left|\left((u_it,u_i\gamma(t))-(u_i,u_i)\right)\cdot e_1\right|\\
      =&\left|-u_i\sqrt{1+\gamma'(1)^2}(t-1)-\frac{u_i\gamma'(1)\gamma''(1)}{2\sqrt{1+\gamma'(1)^2}} (t-1)^2- \frac{u_i\gamma'(1)}{\sqrt{1+\gamma'(1)^2}}o\left((t-1)^2\right) \right|\leq 4 \cdot2^{-m}
      \end{align*}
      and
      \begin{align*}
      &\left|\left((u_it,u_i\gamma(t))-(u_i,u_i)\right)\cdot e_2\right|\\
      =&\left|\frac{u_i\gamma''(1)}{2\sqrt{1+\gamma'(1)^2}} (t-1)^2+ \frac{u_i}{\sqrt{1+\gamma'(1)^2}}o\left((t-1)^2\right) \right|\leq 2\gamma''(1)2^{-2m}
      \end{align*}
      for $m\in\mathbb{N}$ large enough. By the triangle inequality, we may conclude that
      \begin{align*}
      &\left|\left((x_1,x_2)-(u_it,u_i\gamma(t))\right)\cdot e_1\right|\\
      \leq&\left|\left((x_1,x_2)-(u_i,u_i)\right)\cdot e_1\right|+\left|\left((u_it,u_i\gamma(t))-(u_i,u_i)\right)\cdot e_1\right|\leq 5 \cdot2^{-m}
      \end{align*}
      and
       \begin{align*}
      &\left|\left((x_1,x_2)-(u_it,u_i\gamma(t))\right)\cdot e_2\right|\\
      \leq&\left|\left((x_1,x_2)-(u_i,u_i)\right)\cdot e_2\right|+\left|\left((u_it,u_i\gamma(t))-(u_i,u_i)\right)\cdot e_2\right|\leq (1+2\gamma''(1)) 2^{-2m}
      \end{align*}
      for $m\in\mathbb{N}$ large enough. The above implies $(x_1-u_it, x_2-u_i\gamma(t))\in S$ if $(x_1,x_2)\in D_{u_i}$ and $t\in[1-\frac{2^{-m}}{\sqrt{1+\gamma'(1)^2}}, 1]$. Consequently, let $f:=\chi_{S}$, it follows that
       \begin{align*}
     M_{\gamma}f(x_1,x_2)\geq \int_{1-\frac{2^{-m}}{\sqrt{1+\gamma'(1)^2}}}^1\chi_{S}(x_1-u_it, x_2-u_i\gamma(t))\,\textrm{d}t= \frac{2^{-m}}{\sqrt{1+\gamma'(1)^2}}
      \end{align*}
      for any $(x_1,x_2)\in D_{u_i}$ and $m\in\mathbb{N}$ large enough. Furthermore, notice that for any $i\neq i'$, $D_{u_i}\cap D_{u_{i'}}=\emptyset$, and the number of the elements in $\{u_i\}\subset [1,2]$ is equivalent to $2^{2m}$, one may obtain
      $$\|M_{\gamma}f\|_{L^{q}(\mathbb{R}^{2})} \gtrsim \left(\sum_i \int_{D_{u_i}} \left|M_{\gamma}\chi_{S}(x_1,x_2)\right|^q \,\textrm{d}x_1\,\textrm{d}x_2\right)^{\frac{1}{q}}\gtrsim 2^{-m} 2^{-\frac{m}{q}}~~\textrm{and}~~ \|f\|_{L^{p}(\mathbb{R}^{2})}\approx 2^{-\frac{3m}{p}}.$$
      Then the estimate $\|M_{\gamma}f\|_{L^{q}(\mathbb{R}^{2})} \lesssim \|f\|_{L^{p}(\mathbb{R}^{2})}$ implies
      \begin{align*}
      2^{-m} 2^{-\frac{m}{q}} \lesssim 2^{-\frac{3m}{p}},
      \end{align*}
     which further implies that $(\frac{1}{p},\frac{1}{q})$ satisfy $\frac{1}{q}\geq\frac{3}{p}-1$ since $m\in\mathbb{N}$ can be sufficiently large. We may therefore establish (iii) of Theorem \ref{thm2}.

  \item[$\bullet$] Proof of (iv) of Theorem \ref{thm2}. Recall that $\omega=\limsup_{t\rightarrow 0^{+}}\frac{\ln|\gamma(t)|}{\ln t}\in (0,\infty)$, which implies $\omega=\inf_{\tau\in (0,1)}\sup_{t\in (0,\tau)}\frac{\ln|\gamma(t)|}{\ln t}$. Then, for any $\epsilon>0$, there exists a positive constant $\tau\in (0,1)$ such that $\sup_{t\in (0,\tau)}\frac{\ln|\gamma(t)|}{\ln t}\in [\omega,\omega+\epsilon)$. Moreover, there exists a positive constant $t\in (0,\tau)$ such that $\frac{\ln|\gamma(t)|}{\ln t}\in (\omega-\epsilon,\omega+\epsilon)$, which further implies that
      $$\sup_{s\in (0,t)}\frac{\ln|\gamma(s)|}{\ln s}\leq\sup_{s\in (0,\tau)}\frac{\ln|\gamma(s)|}{\ln s}< \omega+\epsilon ~~\textrm{and}~~ \sup_{s\in (0,t)}\frac{\ln|\gamma(s)|}{\ln s}\geq \omega. $$
      Hence, there exists a positive constant $t'\in (0,t)$ such that $\frac{\ln|\gamma(t')|}{\ln t'}\in (\omega-\epsilon,\omega+\epsilon)$. Repeating the same procedure, there exists a sequence $\{t_i\}_{i=1}^{\infty}\subset (0,1)$ such that $\frac{\ln|\gamma(t_i)|}{\ln t_i}\in (\omega-\epsilon,\omega+\epsilon)$, where $\{t_i\}_{i=1}^{\infty}$ is strictly decreasing as $i$ increases and satisfies $\lim_{i\rightarrow\infty}t_i=0$ . We can then write
      \begin{align}\label{eq:3.1a}
      \left|\gamma(t_i)\right|={t_i}^{\log_{t_i} {|\gamma(t_i)|}}={t_i}^{\frac{\ln |\gamma(t_i)|}{\ln t_i}}\in \left({t_i}^{\omega+\epsilon}, {t_i}^{\omega-\epsilon}\right).
      \end{align}
      \quad On the other hand, we can prove that
      \begin{align}\label{eq:3.1b}
      t_i\left(t_i\left|\gamma(t_i)\right|\right)^{\frac{1}{q}-\frac{1}{p}}\lesssim 1,
      \end{align}
      where the implicit constant is independent of $i$. Indeed, let $S:=[-t_i, t_i]\times[-2|\gamma(t_i)|, 2|\gamma(t_i)|]$ and $D:=[0, t_i]\times[0, |\gamma(t_i)|]$, it is not difficult to check that $(x_1-t, x_2-\gamma(t))\in S$ for all $(x_1,x_2)\in D$ and $t\in[0, t_i]$. Hence, let $f:=\chi_{S}$, one obtains
      \begin{align*}
      \int_{0}^{t_i}\chi_{S}(x_1-t,x_2-\gamma(t))\,\textrm{d}t= t_i
      \end{align*}
      for any $(x_1,x_2)\in D$, it then follows that
      \begin{align*}
      \left\|\int_{0}^{t_i}\chi_{S}(x_1-t,x_2-\gamma(t))\,\textrm{d}t\right\|_{L^{q}(D)}\gtrsim t_i\left(t_i\left|\gamma(t_i)\right|\right)^{\frac{1}{q}} ~~\textrm{and}~~ \|f\|_{L^{p}(\mathbb{R}^{2})}\approx \left(t_i\left|\gamma(t_i)\right|\right)^{\frac{1}{p}}.
      \end{align*}
     This, combined with the estimate $\|M_{\gamma}f\|_{L^{q}(\mathbb{R}^{2})} \lesssim \|f\|_{L^{p}(\mathbb{R}^{2})}$, implies \eqref{eq:3.1b} as desired.

     \quad By (ii) of Theorem \ref{thm2}, we have that $(\frac{1}{p},\frac{1}{q})$ satisfy $\frac{1}{q}\leq\frac{1}{p}$. Combining \eqref{eq:3.1a} and \eqref{eq:3.1b}, we obtain
      \begin{align*}
      1\gtrsim t_i\left(t_i\left|\gamma(t_i)\right|\right)^{\frac{1}{q}-\frac{1}{p}} \gtrsim t_i\left(t_i\left({t_i}^{\omega-\epsilon}\right)\right)^{\frac{1}{q}-\frac{1}{p}}={t_i}^{1+(1 +\omega-\epsilon)\left(\frac{1}{q}-\frac{1}{p}\right)}.
      \end{align*}
      Notice that the sequence $\{t_i\}_{i=1}^{\infty}\subset (0,1)$ is strictly decreasing as $i$ increases and satisfies $\lim_{i\rightarrow\infty}t_i=0$, we must have
      \begin{align}\label{eq:3.1c}
      1+(1 +\omega-\epsilon)\left(\frac{1}{q}-\frac{1}{p}\right)\geq 0
      \end{align}
      for all $\epsilon>0$. Let $\epsilon\rightarrow0$ in \eqref{eq:3.1c}, it now follows that $(\frac{1}{p},\frac{1}{q})$ satisfy $1+(1 +\omega)\left(\frac{1}{q}-\frac{1}{p}\right)\geq 0$, and so we have (iv) of Theorem \ref{thm2}.
\end{enumerate}

Therefore, putting things together we finish the proof of Theorem \ref{thm2}.

\bigskip

\noindent{\bf Acknowledgments.} The authors would like to thank Lixin Yan for helpful suggestions and discussions. We also particularly thank Shaoming Guo for his constant support and providing us some important references which is very useful to our work.

\bigskip

\bigskip

\noindent  Naijia Liu

\smallskip

\noindent  School of Mathematics, Sun Yat-sen University, Guangzhou, 510275,  People's Republic of China

\smallskip

\noindent {\it E-mail}: \texttt{liunj@mail2.sysu.edu.cn}

\bigskip

\noindent Haixia Yu (Corresponding author)

\smallskip

\noindent  Department of Mathematics, Shantou University, Shantou, 515063, People's Republic of China

\smallskip

\noindent{\it E-mail}: \texttt{hxyu@stu.edu.cn}

\bigskip

\end{document}